\newtheorem{theorem}{Theorem}[section]
\newtheorem{lemma}[theorem]{Lemma}
\newtheorem{proposition}[theorem]{Proposition}
\newtheorem{corollary}[theorem]{Corollary}
\theoremstyle{definition}
\newtheorem{definition}[theorem]{Definition}
\newtheorem{remark}[theorem]{Remark}
\numberwithin{equation}{section}
\begin{document}
\title{Admissible Coalgebras and Non-cosemisimple Hopf Algebras}
\author{Z.-P. Fan}
\address{the School of Mathematical Sciences, Ocean University of China,
Qingdao, 266100, P.R. China} \email{fanzp@ouc.edu.cn}

\subjclass[2000]{16T05, 16T15}



\keywords{the classification of finite dimensional Hopf algebras, coalgebra}

\begin{abstract}
In this paper, we give several necessary conditions for non-cosemisimple coalgebras being admissible. The implications simplify the classification problems for Hopf algebras of dimension 45, 105 and a few others.
\end{abstract}
\maketitle

\section*{Introduction}
This paper provides a new method for the classification of finite dimensional Hopf algebras, by showing necessary conditions of coalgebras being admissible. The ground field $\Bbbk$ is algebraically closed with $\mathrm{char}\ \Bbbk= 0$.

For its significant meaning in algebra and quantum physics, the classification of finite dimensional Hopf algebras has always been one of the central problems in the study of Hopf algebras. As the fundamental structure of Hopf algebras, coalgebras play an important role in the classification problem. Considering the property of the coradicals, we divide Hopf algebras into three kinds: cosemisimple, pointed and non-cosemisimple non-pointed. So far, cosemisimple Hopf algebras of dimensions $pq^2$ and $pqr$ and pointed Hopf algebras of dimension $pq^2$ have been classified by Natale and Andruskiewitsch\cite{natale1999semisimple,etingof2011weakly,andruskiewitsch2001counting}. Hilgemann and Ng prove that non-cosemisimple non-pointed Hopf algebras of dimension $2p^2$ are all duals of pointed Hopf algebras\cite{hilgemann2009hopf},  which completes the classification of such dimensions.

  The question of classifying Hopf algebras of a given dimension comes from the Kaplansky's ten conjectures (\cite{Kaplansky1975Bialgebras}). So far there are only a few general results.
The classification splits into several different parts according to the Hopf algebra being cosemisimple, pointed or non-pointed non-cosemisimple. A coalgebra is called `cosemisimple' if it equals its coradical, and it is called `pointed' if every simple subcoalgebra is $1$-dimensional. Hopf algebras which are both pointed and cosemisimple are group algebras. Then in the following discussions, by `pointed' we mean `pointed but not cosemisimple'.

For a finite dimensional Hopf algebra, by \cite{Larson1988Finite}, it is semisimle as an algebra if and only if it is cosemisimple as a coalgebra. There are a few important results on cosemisimple Hopf algebras, but there is no general strategy for their classification so far. For $p,q,r$ being odd primes, it is already known that cosemisimple Hopf algebras with dimension $p,2p,p^2,pq$ are trivial (\cite{zhu1994hopf,masuoka1995semisimple,masuoka1996pn,etingof1998semisimple}), and the classification of cosemisimple Hopf algebras with dimension $p^3,2p^2,pq^2,pqr$ are completed (\cite{masuoka1995semisimple68,masuoka1995self,natale1999semisimple,natale2001semisimple,natale2004semisimple,etingof2011weakly}).

At the moment, the most general method for the classification of pointed Hopf algebras is the lifting method developed by Andruskiewitsch and Schneider. First they decompose the associated graded Hopf algebra $gr(A)$ of a pointed Hopf algebra $A$ into a smash biproduct of a braided Hopf algebra $R$ and the group algebra $\Bbbk G(A)$, then study the structure of $R$ as a Nichols algebra over $\Bbbk G(A)$, finally lift relations in $R$ to get ones in $A$ (\cite{Andruskiewitsch2002Pointed,Andruskiewitsch1998Lifting,andruskiewitsch2000finite,Andruskiewitsch2010On}).

For $p,q$ being odd primes and $p<q\le 4p+11$, there is no non-cosemisimple non-pointed Hopf algebra with dimension $p,2p,p^2,pq$ (\cite{zhu1994hopf},\cite{ng2005hopf},\cite{Ng2001Non},\cite{ng2008hopf}).
For non-cosemisimple non-pointed Hopf algebras of other dimensions, there are several classification methods based on different subjects. The dimension of spaces related to the coradical (\cite{andruskiewitsch2001counting},\cite{beattie2004hopf},\cite{fukuda2008structure}), the order of the antipode (\cite{Ng2001Non},\cite{ng2004hopf},\cite{ng2005hopf},\cite{ng2008hopf},\cite{hilgemann2009hopf}), the Hopf subalgebras and the quotient Hopf algebras (\cite{natale2002hopf}) and the braided Hopf algebras in smash biproducts (\cite{cheng2011hopf}) all play important roles in the classification of non-cosemisimple non-pointed Hopf algebras.

The idea of block system is inspired by the classification technique used in \cite{andruskiewitsch2001counting}, \cite{beattie2004hopf} and \cite{fukuda2008structure}. For a Hopf algebra, we build the block system based on its coradical filtration. Block systems become interesting when the Hopf algebras are non-cosemisimple and non-pointed. In this paper, we focus on non-cosemisimple Hopf algebras with no nontrivial skew primitives, which form a subclass of non-cosemisimple non-pointed Hopf algebras. Corollary \ref{cor1}, \ref{cor2}, \ref{cor3}, \ref{cor4} and Proposition \ref{prop3} are the `rules' we know so far, which lead to our main results, Theorem \ref{thm1} and Theorem \ref{thm2}. Theorem \ref{thm1} shows a lower bound for the dimensions of non-cosemisimple Hopf algebras with no nontrivial skew primitives. By Radford's formula for $\mathcal{S}^4$, orders of group like elements are crucial in classification methods working with the antipode. So Theorem \ref{thm2} contributes to the classification of Hopf algebras of dimensions involved in it.

The block system shows a way to understand a non-cosemisimple Hopf algebra through its coalgebra structure by decomposing it into blocks which are highly related by the `rules'. We believe that by further study, more `rules' will be discovered, so more clearly we understand the Hopf algebra structure.

\section{The Building of a Block System}
For further discussions, we first construct for a non-cosemisimple coalgebra two abstract structures, which are called `tree structure' and `block system' respectively.

Let $(C,\Delta,\varepsilon)$ be a non-cosemisimple coalgebra over $\Bbbk$. According to \cite[Theorem 5.4.2]{montgomery1993hopf}, there exists a coideal $I$ and a coalgebra projection on the coradical $\pi: C\to C_0$, such that $C=C_0 \oplus I$ and $\ker \pi = I$. Fix this coideal $I$ for $C$, and set
\begin{equation}\label{eqrholrhor}
\rho_L = (\pi \otimes id)\Delta\ \text{和 }\ \rho_R = (id \otimes\pi)\Delta \ .
\end{equation}
See that $(C,\rho_L,\rho_R)$ becomes a $C_0$-bicomodule.

Let $\{C_n\}_{n\ge 0}$ denote the coradical filtration of $C$, and set
\begin{equation}\label{eqPn}
P_n= C_n \cap I \ .
\end{equation}
See more details for $P_n$ in \cite[Lemma 1.1]{andruskiewitsch2001counting}.

Let $\hat{C}$ denote the index set of simple subcoalgebras of $C$. For each $\tau \in \hat{C}$, there exists an integer $d_\tau$ such that $\dim D_\tau= {d_{\tau}}\!\!^2$. For convenience, let $D_g$ denote the simple subcoalgebra generated by the group like element $g\in G(C)$, so the index of $D_g$ in $\hat{C}$ is also $g$. For $d\ge 1$, set
\begin{equation}\label{eqC0d}
C_{0,d} = \bigoplus_{\tau\in \widehat{C},d_\tau = d} D_\tau \ .
\end{equation}

For each $\tau \in \hat{C}$, choose and fix a standard basis $\{e^\tau_{i,j}\}_{i,j\in\{1,\cdots,d_\tau\}}$ for simple subcoalgebra $D_\tau$, with the following structure:
\begin{equation}\label{eqstdbas}
\Delta(e^\tau_{i,j})=\sum_{k=1}^{d_\tau} e^\tau_{i,k}\otimes e^\tau_{k,j} \ ,\  \varepsilon(e^\tau_{i,j})=\delta_{i,j}.
\end{equation}
It is well known that $D_\tau$ has the unique simple right (resp. left) comodule isomorphism class $V_\tau$ (resp. $V_\tau^*$), and with the fixed standard basis $\{e^\tau_{i,j}\}_{i,j\in\{1,\cdots,d_\tau\}}$,
\begin{equation}\label{eqVtauVtaustar}
V_\tau \cong \Bbbk\{e^\tau_{1,j}\}_{j\in \{1,\cdots,d_\tau\}},\ V^*_\tau \cong \Bbbk\{e^\tau_{i,1}\}_{i\in \{1,\cdots,d_\tau\}} \ .
\end{equation}

Since $C_0$ is cosemisimple and $\Bbbk$ is a perfect filed, the $C_0$-bicomodule category ${^{C_0}}\!\mathcal{M}^{C_0}$ is semisimple. For $M\in {^{C_0}}\!\mathcal{M}^{C_0}$ and $\tau,\mu \in \widehat{C}$, consider $M$'s simple subbicomodules isomorphic to $V^*_\tau \otimes V_\mu$, and let $M^{\tau,\mu}$ denote the sum of such simple subbicomodules. Then
\begin{equation}\label{eqMtaumu}
M = \bigoplus_{\tau,\mu \in \widehat{C}} M^{\tau,\mu} \ .
\end{equation}

For $n\ge 1$, see that $(P_n,\rho_L,\rho_R)\in {^{C_0}}\!\mathcal{M}^{C_0}$.
Then
\begin{equation}\label{eqPtaumu}
P_n = \bigoplus_{\tau,\mu \in \widehat{C}} P^{\tau,\mu}_n \ .
\end{equation}
Since $P_{n-1}\subseteq P_n$ is a $C_0$-subbicomodule, then there exists another $C_0$-subbicomodule $Q_n\subseteq P_n$, such that
$$
P_n=P_{n-1}\oplus Q_n \ .
$$
Notice that as a $C_0$-bicomodule, $Q_n$ is unique up to isomorphisms; to be precise,
$$
Q_n\cong P_n/P_{n-1} \cong C_n/C_{n-1} \ .
$$

For $\tau, \mu \in \widehat{C}, n\ge 1$, there exists an index set $\Lambda_{\tau,
\mu, n}$, such that
\begin{equation}\label{eqxiaokuai}
Q^{\tau, \mu}_n = \bigoplus_{\lambda_{\tau, \mu, n}\in
\Lambda_{\tau, \mu, n}} V^{\tau, \mu}_{n, \lambda_{\tau, \mu, n}} \
,
\end{equation}
where $V^{\tau, \mu}_{n, \lambda_{\tau, \mu, n}}$ is the simple $C_0$-subbicomodule isomorphic to $V^*_\tau \otimes V_\mu$.

By the discussion above, we have the following equations:
\begin{equation}\label{eqcditsumdecom}
\begin{array}{rl}
C = & C_0\oplus I
\\
=& C_0 \oplus \bigcup_{n\ge 1} P_n
\\
=& C_0 \oplus \bigoplus_{n\ge 1} Q_n
\\
= & \bigoplus_{\sigma\in \widehat{C}} D_\sigma \ \oplus
\bigoplus_{n\ge 1} \Big( \bigoplus_{\tau, \mu \in \widehat{C},
}Q^{\tau, \mu}_n \Big)
\\
= &  \bigoplus_{\sigma\in \widehat{C}} D_\sigma \ \oplus
\bigoplus_{n\ge 1} \Big( \bigoplus_{\tau, \mu \in \widehat{C}} \big(
\bigoplus_{\lambda_{\tau, \mu, n}\in \Lambda_{\tau, \mu, n}}
V^{\tau, \mu}_{n, \lambda_{\tau, \mu, n}} \big)
\Big) \ .\\
\end{array}
\end{equation}

Call the direct sum in (\ref{eqcditsumdecom}) $C=\bigoplus_{\sigma\in
\widehat{C}} D_\sigma \ \oplus \bigoplus_{n\ge 1} \Big(
\bigoplus_{\tau, \mu \in \widehat{C},  }Q^{\tau, \mu}_n
\Big)$ \emph{\textbf{a tree structure}} of $C$.

For every $V^{\tau, \mu}_{n, \lambda_{\tau, \mu, n}}$,
by(\ref{eqVtauVtaustar}),  there exists one $C_0$-bicomodule isomorphism
\begin{equation}\label{eqvtauvmuisoetauemu}
f: \Bbbk \{e^\tau_{i, 1} \otimes e^\mu_{1, j}\}_{i\in \{1, \cdots,
d_\tau\}, j\in \{1, \cdots, d_\mu\}} \to V^{\tau, \mu}_{n,
\lambda_{\tau, \mu, n}} \ .
\end{equation}
Set $w_{i, j}=f(e^\tau_{i, 1} \otimes e^\mu_{1, j})$, then
\begin{equation}\label{eqrhorwij}
\begin{array}{rl}
\rho_R(w_{i, j}) =& (f\otimes id )(id \otimes
\mathit\Delta)f^{-1}(w_{i, j})
\\
=& (f\otimes id ) \Big( e^\tau_{i, 1} \otimes \big(
\sum_{l=1}^{d_\mu} e^\mu_{1, l} \otimes e^\mu_{l, j} \big) \Big)
\\
=& \sum_{l=1}^{d_\mu} w_{i, l}\otimes e^\mu_{l, j} \ ;
\end{array}
\end{equation}
and
\begin{equation}\label{eqrholwij}
\begin{array}{rl}
\rho_L(w_{i, j}) =& (id \otimes f)(\mathit\Delta\otimes id
)f^{-1}(w_{i, j})
\\
=& (id \otimes f) \Big( \big( \sum_{k=1}^{d_\tau} e^\tau_{i, k}
\otimes e^\tau_{k, 1} \big) \otimes e^\mu_{1, j} \Big)
\\
=& \sum_{k=1}^{d_\tau} e^\tau_{i, k}\otimes w_{k, j} \ .
\end{array}
\end{equation}
Set
\begin{equation}\label{eqye}
W^{\tau, \mu}_{n, \lambda_{\tau, \mu, n}}=\{w_{i, j}\}_{i\in \{1,
\cdots, d_\tau\}, j\in \{1, \cdots, d_\mu\}} \ ,
\end{equation}
then $W^{\tau, \mu}_{n, \lambda_{\tau, \mu, n}}$ is a basis of $V^{\tau, \mu}_{n,
\lambda_{\tau, \mu, n}}$.

For $n\ge 1$, set
\begin{equation}\label{eqgan}
W_n = \bigcup_{\tau, \mu \in \widehat{C}} W^{\tau, \mu}_n \ .
\end{equation}
then $W_n$ is a basis of $Q_n$.

For $n=0$, denote the union of the standard basis of $C$'s simple subcoalgebras by
\begin{equation}\label{eqW0}
W_0=\bigcup_{\sigma \in \widehat{C}, i_\sigma, j_\sigma\in\{1\,
\cdots, d_\sigma\}} \{e^\sigma_{i_\sigma, j_\sigma}\} \ ,
\end{equation}
then $W_0$ is a basis of $C_0$.

Set
\begin{equation}\label{eqW}
W = W_0\cup \bigcup_{n\ge 1} W_n  \ .
\end{equation}
by(\ref{eqcditsumdecom}),  $W$ is a basis of $C$, which is related to the tree structure of $C$.

Call $W$ in (\ref{eqW}) \textbf{\emph{a tree structure basis}} of $C$ derived by $W_0$ and $I$.

Set a map $\overline{\mathit\Delta}:C\to I\otimes I$ as followings:
$$
\overline{\mathit\Delta}(c) = \left\{
\begin{array}{ll}
(\mathit\Delta - \rho_L-\rho_R)(c), & \text{if }c\in I;
\\
0,  &\text{if }c\in C_0.
\end{array}
\right.
$$

For $w\in W$, if $w\in W\cap P_n$, $n\ge 1$, then
\begin{equation}\label{eqolDeltaw}
\overline{\mathit\Delta}(w)=\sum_{x, y\in W\cap P_{n-1}} k_{w, x, y}
x\otimes y,
\end{equation}
with only finite nonzero $k_{w, x, y}\in \Bbbk$. Set
\begin{equation}\label{eqoverlineLwRw}
\begin{array}{l}
\overline{\mathcal{L}}(w)=\left\{
\begin{array}{ll}
\{x\in W\cap I\ |\ k_{w, x, y}\ne 0, y\in W\cap I\}, & w\in W\cap I, \\
\varnothing,  & w\in W\cap C_0,
\end{array}\right.
\\
\overline{\mathcal{R}}(w)=\left\{
\begin{array}{ll}
\{y\in W\cap I \ |\ k_{w, x, y}\ne 0, x\in W\cap I\}, & w\in W \cap I, \\
\varnothing,  & w\in W\cap C_0.
\end{array}\right.
\end{array}
\end{equation}
These two sets will be used in the following discussions.

\begin{definition}\label{defbs}
For $n\ge 0$,
$$
B^{d_1, d_2}_n = \left\{
\begin{array}{ll}
0, &n=0,  d_1\ne d_2, \\
C_{0, d}, &n=0,  d_1=d_2=d\ge 1, \\
\bigoplus_{\tau, \mu \in \hat{C},  d_\tau=d_1, d_\mu=d_2}Q^{\tau,
\mu}_n,  &n, d_1 ,  d_2\ge1,
\end{array}\right.
$$
is called \textbf{\emph{a block}} of $C$ derived by $I$ and $Q_n$.

The sum $C= \bigoplus_{n\ge0, d_1, d_2\ge1} B^{d_1, d_2}_n$ is called \textbf{\emph{a block system}} of $C$ derived by $I$ and $\{Q_n\}_{n\ge1}$.
\end{definition}

\begin{proposition}\label{propnzdecom}
If there exixt $n>1$ and $d_1, d_2\ge1$, such that $B^{d_1, d_2}_n \ne 0$, then there exists a set of positive integers $\{b_1, \cdots,
b_{n-1}\}$, such that for $i = 1, \cdots,  n-1$, we have $B^{d_1, b_i}_i \ne
0$ and $B^{b_i, d_2}_{n-i} \ne 0$.
\end{proposition}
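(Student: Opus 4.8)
The plan is to work with the reduced coproduct $\overline{\Delta}=(\Delta-\rho_L-\rho_R)|_I$, which on $I$ equals $\pi_I^{\otimes 2}\Delta$ for $\pi_I=\mathrm{id}-\pi$, and to exploit three features of it: it is coassociative, its $m$-fold iterate $\overline{\Delta}^{(m)}$ coincides with the ``fully interior'' iterated coproduct $\pi_I^{\otimes(m+1)}\Delta^{(m)}$, and it is a morphism of $C_0$-bicomodules (left structure read off the first tensor leg, right structure off the last). The first two features give the filtration description $P_m=\{c\in I:\overline{\Delta}^{(m)}(c)=0\}$, so that $\overline{\Delta}$ drops the filtration degree by at least one and, passing to $\mathrm{gr}(C)$, splits into graded pieces $\overline{\Delta}^{\,\mathrm{gr}}_{a,b}\colon Q_n\to Q_a\otimes Q_b$ with $a+b=n$, $a,b\ge1$. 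Since $B^{d_1,d_2}_n\ne0$ I may fix $\tau,\mu$ with $d_\tau=d_1$, $d_\mu=d_2$ and a nonzero $w\in Q^{\tau,\mu}_n$; the statement then reduces to showing that for each $i$ the component $u:=\overline{\Delta}^{\,\mathrm{gr}}_{i,n-i}(w)$ is nonzero and lies in $Q^{\tau,\sigma}_i\otimes Q^{\sigma,\mu}_{n-i}$ for a single index $\sigma$.

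Next I would establish $u\ne0$. As $w\in Q_n$ we have $w\in P_n\setminus P_{n-1}$, hence $\overline{\Delta}^{(n-1)}(w)\ne0$; moreover $\overline{\Delta}^{(n-1)}(w)$ lands in $Q_1^{\otimes n}$ (here $P_1=Q_1$, since $P_0=C_0\cap I=0$), because a composition of $n$ into $n$ positive parts must be all ones. Writing $\overline{\Delta}^{(n-1)}=(\overline{\Delta}^{(i-1)}\otimes\overline{\Delta}^{(n-i-1)})\,\overline{\Delta}$ and bookkeeping filtration degrees, $\overline{\Delta}^{(i-1)}$ can produce a nonzero element of $Q_1^{\otimes i}$ only from a degree-$i$ input and $\overline{\Delta}^{(n-i-1)}$ only from a degree-$(n-i)$ input, so every bidegree of $\overline{\Delta}(w)$ except $(i,n-i)$ is annihilated; consequently $\overline{\Delta}^{\,\mathrm{gr}}_{i,n-i}(w)=u\ne0$. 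The bicomodule-map property then forces the first leg of $u$ to have left-type $\tau$ and the second leg to have right-type $\mu$, i.e. $u\in\bigoplus_{\nu,\sigma'}Q^{\tau,\nu}_i\otimes Q^{\sigma',\mu}_{n-i}$.

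The hard part will be matching the ``middle'' types: the bicomodule structure alone only equates the set of right-types occurring among the first legs with the set of left-types occurring among the second legs, which does not pair them inside a single tensor term. To close this gap I would localize coassociativity to tridegree $(i,0,n-i)$, obtaining $(\rho_R\otimes\mathrm{id})\,u=(\mathrm{id}\otimes\rho_L)\,u$ in $Q_i\otimes C_0\otimes Q_{n-i}$. Decomposing the middle factor along $C_0=\bigoplus_\sigma D_\sigma$ and applying $\mathrm{id}\otimes\varepsilon\otimes\mathrm{id}$ (using $(\mathrm{id}\otimes\varepsilon)\rho_R=\mathrm{id}=(\varepsilon\otimes\mathrm{id})\rho_L$) identifies the right-type-$\sigma$ part of the first leg with the left-type-$\sigma$ part of the second leg, so that $u=\sum_\sigma u_\sigma$ with $u_\sigma\in Q^{\tau,\sigma}_i\otimes Q^{\sigma,\mu}_{n-i}$. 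Since $u\ne0$, some $u_\sigma\ne0$, whence $Q^{\tau,\sigma}_i\ne0$ and $Q^{\sigma,\mu}_{n-i}\ne0$; taking $b_i=d_\sigma$ yields $B^{d_1,b_i}_i\ne0$ and $B^{b_i,d_2}_{n-i}\ne0$, and running this for every $i=1,\dots,n-1$ produces the desired $\{b_1,\dots,b_{n-1}\}$. Note that this argument in fact delivers the stronger conclusion with a common middle index $\sigma$, not merely a common dimension $b_i$.
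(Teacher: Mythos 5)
Your argument is correct, and it reaches the slightly stronger conclusion (a common middle simple subcoalgebra, not just a common dimension) that the cited source actually provides. But it is organized very differently from the paper's proof, which is essentially a two-line reduction: the paper observes that a nonzero block $B^{d_1,d_2}_n$ is exactly a sum of nonzero isotypic components $Q^{\tau,\mu}_n$ with $d_\tau=d_1$, $d_\mu=d_2$, and then invokes \cite[Lemma 3.2]{fukuda2008structure} (restated later as Proposition \ref{prop1}(1)) as a black box. You perform the same reduction implicitly but then prove the cited lemma from scratch: you identify $\overline{\Delta}$ with $\pi_I^{\otimes 2}\Delta$ on $I$ (valid because $(\pi\otimes\pi)\Delta$ vanishes on the coideal $I$), verify its coassociativity and the kernel characterization $P_m=\ker\overline{\Delta}^{(m)}$, use the factorization $\overline{\Delta}^{(n-1)}=(\overline{\Delta}^{(i-1)}\otimes\overline{\Delta}^{(n-i-1)})\overline{\Delta}$ to show each top graded component $\overline{\Delta}^{\,\mathrm{gr}}_{i,n-i}(w)$ is nonzero, and close the genuine gap (pairing the middle types) with the identity $(\rho_R\otimes\mathrm{id})\overline{\Delta}=(\mathrm{id}\otimes\rho_L)\overline{\Delta}=(\pi_I\otimes\pi\otimes\pi_I)\Delta^{(2)}$ followed by $\mathrm{id}\otimes\varepsilon\otimes\mathrm{id}$. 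Each of these steps checks out, including the point you flag yourself that the bicomodule property alone does not pair the middle indices. What your route buys is a self-contained proof depending only on the coalgebra structure and the tree-structure decomposition already set up in Section 1, at the cost of redoing work the paper deliberately outsources; what the paper's route buys is brevity and a clean separation between the block formalism and Fukuda's structural results on the coradical filtration.
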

\begin{proof}
For any $\tau, \mu \in \widehat{C}, n\ge1$, every $Q^{\tau, \mu}_n$ are corresponding to a nonzero block $B^{d_1, d_2}_n$, where $d_1
=d_\tau,  d_2 =d_\mu$. By \cite[Lemma 3.2]{fukuda2008structure}, the proposition is proved.
\end{proof}

\begin{remark}\label{rmkdimblockinvariant}
As $C_0$-bicomodules, $Q^{\tau,\mu}_n\cong (C_n/C_{n-1})^{\tau,\mu}$. Although the building of a block system of $C$ depends on the choice of $I$ and $Q_n$, the dimension of a block with specified superscript and subscript is invariant for $C$. That is, if $B'^{d_1,d_2}_n$ is a block induced by $I'$ and $Q'_n$, then $\dim B^{d_1,d_2}_n = \dim B'^{d_1,d_2}_n$. So $\{\dim B^{d_1,d_2}_n\}_{n\ge0,d_1,d_2\ge1}$ should be considered as classification invariants for coalgebras.
\end{remark}

\section{Block system for Hopf algebras}
In this section, the idea of block system is applied for Hopf algebras. We discuss relations between blocks to study the structure of Hopf algebras, then give necessary conditions for non-cosemisimple coalgebras being admissible.

Assume that $H$ is a non-cosemisimple Hopf algebra of finite dimension. Follow notations in section 1.

For $\tau, \mu \in \widehat{H},  g\in G(H)$, denote subcoalgebras $\mathcal{S}(D_\tau)$,
$gD_\tau$ and $D_\tau g$ by $\mathcal{S}\tau$, $g\tau$ and $\tau g$ respectively.

Obviously, $gQ^{\tau, \mu}_n = Q^{g\tau,g\mu}_n$ and $\mathcal{S}(Q^{\tau, \mu}_n)=Q^{\mathcal{S}\mu,\mathcal{S}\tau}_n$. According to Definition \ref{defbs}, we have
\begin{equation}\label{eqgBd1d2n}
gB^{d_1, d_2}_n = B^{d_1, d_2}_n g= B^{d_1, d_2}_n,  \ \text{and}\
\end{equation}
\begin{equation}\label{eqSBd1d2n}
\mathcal{S}(B^{d_1, d_2}_n ) = B^{d_2, d_1}_n.
\end{equation}

\begin{proposition}\label{propBd1d2ndvdbyGH}
For any $n\ge 0$ and $d_1, d_2\ge1$, $\dim B^{d_1, d_2}_n$ is divided by $\dim B^{1, 1}_0=|G(H)|$.
\end{proposition}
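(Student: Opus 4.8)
The plan is to show that $B:=B^{d_1,d_2}_n$ is a free module over the group algebra $\Bbbk G(H)$ under left multiplication, from which $|G(H)|\mid\dim B$ is immediate. Write $G:=G(H)$. First I would record that for $g\in G$ the left multiplication $L_g\colon H\to H$ is a coalgebra automorphism, since $\Delta(gx)=(g\otimes g)\Delta(x)$ and $\varepsilon(gx)=\varepsilon(x)$; hence $L_g$ preserves the coradical filtration, the decomposition \eqref{eqcditsumdecom}, and by \eqref{eqgBd1d2n} each block, so $B$ is a finite-dimensional left $\Bbbk G$-module. Because $\mathrm{char}\,\Bbbk=0$, $\Bbbk G$ is semisimple, and it suffices to prove that the character of $B$ vanishes off the identity, that is $\mathrm{tr}(L_g\vert_B)=0$ for every $g\neq 1$: then the multiplicities $\langle\chi_B,\chi_W\rangle=\tfrac{\dim B\cdot\dim W}{|G|}$ show that $B$ is a multiple of the regular representation, of rank $\dim B/|G|\in\mathbb{Z}_{\ge 0}$.

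Next I would localise the trace. Since $L_g$ carries the simple sub-bicomodule of type $(\tau,\mu)$ to one of type $(g\tau,g\mu)$ (compatibly with \eqref{eqrhorwij}--\eqref{eqrholwij}), only the summands $Q^{\tau,\mu}_n$ with $g\tau=\tau$ and $g\mu=\mu$ contribute to $\mathrm{tr}(L_g\vert_B)$. For such a stabilised summand $L_g$ restricts to a coalgebra automorphism of each of $D_\tau$ and $D_\mu$; by Skolem--Noether these are inner, implemented by intertwiners $\sigma_\tau(g)\in GL(V_\tau)$ and $\sigma_\mu(g)\in GL(V_\mu)$, so the contribution is governed by $\mathrm{tr}\,\sigma_\tau(g)$ and $\mathrm{tr}\,\sigma_\mu(g)$. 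In particular, at $n=0$ the only stabilised summands are the fixed $D_\tau$ with $d_\tau=d$, each contributing $\mathrm{tr}(L_g\vert_{D_\tau})=\mathrm{tr}\,\sigma_\tau(g)\,\mathrm{tr}\,\sigma_\tau(g)^{-1}=|\mathrm{tr}\,\sigma_\tau(g)|^2\ge 0$ (choosing $\sigma_\tau(g)$ unitary); thus every contribution to $\mathrm{tr}(L_g\vert_{C_{0,d}})$ is non-negative. The global input I would use is the Nichols--Zoeller theorem: $H$ is free over the Hopf subalgebra $\Bbbk\langle g\rangle$, whence $\mathrm{tr}(L_g\vert_H)=0$.

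The main obstacle is to pass from these global and non-negativity facts to vanishing on a single block, equivalently to prove that a nontrivial grouplike acts freely on every simple subcoalgebra it stabilises, i.e. $\mathrm{tr}\,\sigma_\tau(g)=0$ whenever $g\neq 1$ and $g\tau=\tau$. This cannot follow from the coalgebra structure alone: a single $G$-fixed simple subcoalgebra of dimension $d^2$ with $|G|\nmid d^2$ is coalgebra-theoretically consistent, so the algebra structure of $H$ must enter decisively. My intended route is to equip the stabilised $D_\tau$ with its compatible structure of left $\Bbbk S_\tau$-module and right $H$-comodule, where $S_\tau=\{g\in G:g\tau=\tau\}$ and compatibility comes from $\Delta(sx)=(s\otimes s)\Delta(x)$, and to extract freeness of the underlying projective $\Bbbk S_\tau$-representation $V_\tau$ by a Nichols--Zoeller/relative-Hopf-module argument, forcing $|S_\tau|\mid\dim D_\tau$ and $\mathrm{tr}\,\sigma_\tau(g)=0$. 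Granting the $n=0$ vanishing, I would then combine it with the filtration-wise identity $\sum_{d_1,d_2}\mathrm{tr}(L_g\vert_{B^{d_1,d_2}_n})=\mathrm{tr}(L_g\vert_{C_n/C_{n-1}})$ and induct on $n$; the delicate case is $n\ge 1$, where multiplicity spaces appear and the clean factorisation of the trace and the non-negativity are lost, and this is where I expect the real work to lie, with the symmetry $\mathcal{S}(B^{d_1,d_2}_n)=B^{d_2,d_1}_n$ from \eqref{eqSBd1d2n} helping to balance the left and right contributions.
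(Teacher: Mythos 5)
Your proposal is not a complete proof: you explicitly defer the crucial step. For $n\ge 1$ you write that the trace factorisation and non-negativity are lost and that ``this is where I expect the real work to lie,'' and even your $n=0$ reduction rests on an unproved claim --- that a nontrivial grouplike $g$ stabilising a simple subcoalgebra $D_\tau$ acts with $\mathrm{tr}\,\sigma_\tau(g)=0$ --- for which you only gesture at ``a Nichols--Zoeller/relative-Hopf-module argument.'' (Your non-negativity computation at $n=0$ also needs repair: $\mathrm{tr}(L_g\vert_{D_\tau})=\mathrm{tr}\,\sigma_\tau(g)\,\mathrm{tr}\,\sigma_\tau(g)^{-1}$, not $|\mathrm{tr}\,\sigma_\tau(g)|^2$; one must first argue that $\sigma_\tau(g)$ is, up to scalar, of finite order before these agree under an embedding into $\mathbb{C}$.) So the character-theoretic scaffolding is set up, but the theorem is not proved by it.

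The missing idea is that the relative Nichols--Zoeller theorem can be applied \emph{globally to the block itself}, which makes the entire character-theoretic detour unnecessary. The paper's proof is: for $n=0$ the block $B^{d_1,d_2}_0=C_{0,d}$ is a subcoalgebra stable under right multiplication by $G(H)$, hence a right $(H,\Bbbk G(H))$-Hopf module, hence free over $\Bbbk G(H)$. For $n\ge 1$, one checks from the coradical filtration and $\rho_L,\rho_R$ that
$$
\Delta\big(B^{d_1,d_2}_n\big)\subseteq \big(B^{d_1,d_2}_n\oplus H_{n-1}\big)\otimes H ,
$$
and by (\ref{eqgBd1d2n}) that $B^{d_1,d_2}_n\oplus H_{n-1}$ is stable under right multiplication by $G(H)$; thus both $H_{n-1}$ and $B^{d_1,d_2}_n\oplus H_{n-1}$ are relative $(H,\Bbbk G(H))$-Hopf modules, hence free $\Bbbk G(H)$-modules by Nichols--Zoeller, and since $\Bbbk G(H)$ is semisimple the complement $B^{d_1,d_2}_n$ is itself free, giving $|G(H)|\mid\dim B^{d_1,d_2}_n$ at once. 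You correctly identified the relevant tool (relative Hopf modules over $\Bbbk G(H)$) but aimed it at the individual stabilised simple subcoalgebras rather than at $B^{d_1,d_2}_n\oplus H_{n-1}$; redirecting it there closes the gap and eliminates the induction on $n$ that your outline leaves open.
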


\begin{proof}
For $n=0$, by the Nichols-Zoeller Theorem, $B^{d_1, d_2}_0$ is a free right $\Bbbk G(H)$-module, then $\dim B^{d_1,d_2}_0$ is divided by $|G(H)|$.

For $n\ge1$时, obviously $(H_{n-1}, \mathit\Delta,
m)$ is a right $(H, \Bbbk G(H))$-Hopf module, then $B^{d_1, d_2}_n\oplus
H_{n-1}$ is also a right $(H, \Bbbk G(H))$-Hopf module. Similarly by the Nichols-Zoeller Theorem, $H_{n-1}$ and $B^{d_1, d_2}_n\oplus H_{n-1}$ are free right $\Bbbk G(H))$-modules. Then $B^{d_1, d_2}_n$ is a free right $\Bbbk G(H)$-module. So $\dim
B^{d_1, d_2}_n$ is divided by $|G(H)|$.
\end{proof}

\begin{proposition}\label{propBd1nB1dndvdbydGH}
For any $n\ge1$, $d\ge 1$, $\dim
B^{d, 1}_n$ and $\dim B^{1, d}_n$ are divided by $d|G(H)|$.
\end{proposition}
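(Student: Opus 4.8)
The plan is to prove the statement for $B^{1,d}_n$ and then transfer it to $B^{d,1}_n$ via the antipode. By (\ref{eqSBd1d2n}) we have $\mathcal{S}(B^{d,1}_n)=B^{1,d}_n$, and since $\mathcal{S}$ is bijective on a finite dimensional Hopf algebra, $\dim B^{d,1}_n=\dim B^{1,d}_n$; so it suffices to treat one of the two. The idea is that the two factors have independent origins: the factor $d$ comes from the comodule type of the blocks, and the factor $|G(H)|$ comes from a transitive grouplike symmetry, and the point is that this symmetry preserves the blockwise divisibility by $d$.

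Concretely, for $B^{1,d}_n$ I would first refine the defining sum by grouping the summands $Q^{\tau,\mu}_n$ according to their left grouplike index. Since $d_\tau=1$ forces $\tau\in G(H)$, I set $M_\tau=\bigoplus_{\mu\in\widehat H,\,d_\mu=d}Q^{\tau,\mu}_n$ for each $\tau\in G(H)$, so that $B^{1,d}_n=\bigoplus_{\tau\in G(H)}M_\tau$. The first observation is that each $\dim M_\tau$ is divisible by $d$: by (\ref{eqxiaokuai}) every $Q^{\tau,\mu}_n$ is a direct sum of copies of $V^*_\tau\otimes V_\mu$, each of dimension $d_\tau d_\mu=d$, so $\dim M_\tau$ is a sum of multiples of $d$. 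The decisive second step is to compare the pieces $M_\tau$ by left multiplication with grouplikes. Because $gQ^{\tau,\mu}_n=Q^{g\tau,g\mu}_n$, and $g\mu$ runs over all indices with $d_{g\mu}=d$ as $\mu$ does (multiplication by $g$ being a coalgebra isomorphism $D_\mu\to D_{g\mu}$, hence dimension-preserving), left multiplication by $g$ restricts to a linear isomorphism $M_\tau\to M_{g\tau}$. Since $\tau\mapsto g\tau$ is a free and transitive action of $G(H)$ on the index set $G(H)$, all the spaces $M_\tau$ share one common dimension $\delta$. Hence $\dim B^{1,d}_n=|G(H)|\,\delta$ with $d\mid\delta$, which yields $d|G(H)|\mid\dim B^{1,d}_n$.

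There is no serious obstacle here; the only point needing care is the bookkeeping in the second step, namely checking that left multiplication by $g$ genuinely permutes the blocks $\{M_\tau\}_{\tau\in G(H)}$ among themselves — which is exactly (\ref{eqgBd1d2n}) read off at the level of the grouplike grading — and that the induced action on the index set is a torsor, so that the equal-dimension conclusion is forced. Note that, unlike in Proposition \ref{propBd1d2ndvdbyGH}, this argument does not invoke the Nichols--Zoeller Theorem. The statement for $B^{d,1}_n$ can be obtained directly by the symmetric argument, grouping by the right grouplike index $\mu$ with $d_\mu=1$ and using right multiplication together with $Q^{\tau,\mu}_n g=Q^{\tau g,\mu g}_n$, so one need not even appeal to $\mathcal{S}$.
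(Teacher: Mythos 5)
Your proposal is correct and matches the paper's argument in substance: both obtain the factor $d$ from the dimension $d_\tau d_\mu$ of the simple bicomodules in (\ref{eqxiaokuai}) and the factor $|G(H)|$ from the free permutation action of the grouplikes on the summands $Q^{\tau,\mu}_n$, with no appeal to Nichols--Zoeller. The only cosmetic difference is that you group by the grouplike index into $|G(H)|$ equal-dimensional pieces $M_\tau$, whereas the paper works orbit-by-orbit on the individual $Q^{\tau,g}_n$; your write-up is in fact more detailed than the paper's two-line sketch.
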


\begin{proof}
According to Definition \ref{defbs}, $B^{d, 1}_n = \bigoplus_{\tau\in \hat{H},  d_\tau=d,  g\in G(H)}Q^{\tau, g}_n$. For any $h\in G(H)$, $hQ^{\tau, g}_n=Q^{h\tau, hg}_n\subset B^{d,1}_n$. By (\ref{eqxiaokuai}) and (\ref{eqye}),  $\dim Q^{\tau,
g}_n$ is a multiple of $d$. So $\dim B^{1, d}_n$ is divided by $d|G(H)|$.
\end{proof}

\begin{proposition}\label{propBd2d1ne0}
If there exists $n\ge 1$ and $d_1,
d_2\ge1$, such that $B^{d_1, d_2}_n \ne 0$, then $B^{d_2, d_1}_n \ne 0$.
\end{proposition}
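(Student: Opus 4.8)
The plan is to exploit the symmetry already recorded in equation (\ref{eqSBd1d2n}), namely that the antipode $\mathcal{S}$ sends the block $B^{d_1,d_2}_n$ to $B^{d_2,d_1}_n$. First I would recall that for a finite dimensional Hopf algebra the antipode $\mathcal{S}$ is bijective, and that it is an anti-coalgebra morphism; in particular it preserves the coradical filtration, so $\mathcal{S}(H_n)=H_n$ for every $n$. The computation $\mathcal{S}(Q^{\tau,\mu}_n)=Q^{\mathcal{S}\mu,\mathcal{S}\tau}_n$ stated just before (\ref{eqgBd1d2n}) is the key coalgebra fact: applying $\mathcal{S}$ to a simple subbicomodule of type $V^*_\tau\otimes V_\mu$ in $C_n/C_{n-1}$ produces a simple subbicomodule of the dual type, and crucially $d_{\mathcal{S}\tau}=d_\tau$ because $\mathcal{S}\tau$ is again a simple subcoalgebra of the same dimension $d_\tau^2$ as $D_\tau$.

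Next I would assemble these observations. Suppose $B^{d_1,d_2}_n\neq 0$; by Definition \ref{defbs} this means some $Q^{\tau,\mu}_n\neq 0$ with $d_\tau=d_1$ and $d_\mu=d_2$. Applying the bijective antipode, $\mathcal{S}(Q^{\tau,\mu}_n)=Q^{\mathcal{S}\mu,\mathcal{S}\tau}_n$ is a nonzero subbicomodule (bijectivity guarantees the image is nonzero), and since $d_{\mathcal{S}\mu}=d_\mu=d_2$ and $d_{\mathcal{S}\tau}=d_\tau=d_1$, this nonzero summand lives inside $B^{d_2,d_1}_n$. Hence $B^{d_2,d_1}_n\neq 0$, which is exactly the claim. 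At the level of the blocks themselves this is just (\ref{eqSBd1d2n}) together with $\mathcal{S}$ being a bijection: $\mathcal{S}(B^{d_1,d_2}_n)=B^{d_2,d_1}_n$, so one side is zero if and only if the other is.

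The main obstacle to watch is not any deep computation but making sure the two facts feeding (\ref{eqSBd1d2n}) are legitimate: that $\mathcal{S}$ genuinely restricts to a bijection on each $C_n/C_{n-1}$ and that it respects the bicomodule-type decomposition so as to swap the superscripts $(d_1,d_2)\mapsto(d_2,d_1)$ rather than scrambling them. Concretely, I would verify that $\mathcal{S}$ carries the simple subcoalgebra $D_\tau$ to $D_{\mathcal{S}\tau}$ with the same matrix-coalgebra dimension, and that under $\mathcal{S}$ a right comodule of type $V_\mu$ becomes a left comodule of type $V_\mu^*$ (and symmetrically), which is precisely what turns the type $V^*_\tau\otimes V_\mu$ into $V^*_{\mathcal{S}\mu}\otimes V_{\mathcal{S}\tau}$. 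Once these are in hand the proof is immediate, so I expect the entire argument to reduce to one sentence citing (\ref{eqSBd1d2n}) and the bijectivity of the antipode.
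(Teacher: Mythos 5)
Your argument is correct and follows the same route as the paper: both rest on the identity $\mathcal{S}(B^{d_1,d_2}_n)=B^{d_2,d_1}_n$ from (\ref{eqSBd1d2n}) together with the bijectivity of the antipode, which forces $\dim B^{d_1,d_2}_n=\dim B^{d_2,d_1}_n$ and hence the equivalence of the two nonvanishing conditions. Your extra verification that $\mathcal{S}$ preserves the coradical filtration and sends $Q^{\tau,\mu}_n$ to $Q^{\mathcal{S}\mu,\mathcal{S}\tau}_n$ with $d_{\mathcal{S}\tau}=d_\tau$ is exactly the content the paper delegates to the discussion preceding (\ref{eqgBd1d2n}).
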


\begin{proof}
By (\ref{eqSBd1d2n}), $\dim B^{d_1,
d_2}_n =\dim B^{d_2, d_1}_n $,  which means that $B^{d_1, d_2}_n \ne 0$ is equivalent to $B^{d_2, d_1}_n \ne 0$.
\end{proof}

\begin{proposition}\label{propBd1d3n2ne0}
Suppose that there exist $n_1 \ge1$ and $d_1,
d_2\ge1$, such that $B^{d_1, d_2}_{n_1} \ne 0$. If $d_1 \ne d_2$, then there exists $d_3\ge 1$ and $n_2\ge n_1+1$, such that $B^{d_1, d_3}_{n_2}\ne 0$.
\end{proposition}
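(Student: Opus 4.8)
The plan is to reorganize the blocks by their \emph{left} type and to show that a dimension-mismatched block can never sit at the top of the coradical filtration. Concretely, set
\[
U=\bigoplus_{n\ge 0,\ d_2\ge 1}B^{d_1,d_2}_n ,
\]
the sum of all blocks whose left $C_0$-comodule type has dimension $d_1$; note $U\cap H_0=C_{0,d_1}$. Applying $\pi$ to the outermost tensor factor of coassociativity $(\Delta\otimes \mathrm{id})\Delta=(\mathrm{id}\otimes\Delta)\Delta$ yields
\[
(\rho_L\otimes \mathrm{id})\Delta=(\mathrm{id}\otimes\Delta)\rho_L ,
\]
so that the left type of $w_{(1)}$ always coincides with that of $w$. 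Hence $\Delta(U)\subseteq U\otimes H$, i.e. $U$ is a right coideal, a right $H$-subcomodule of $H$ under $\Delta$, with induced filtration $U_n=U\cap H_n$ and layers $U_n/U_{n-1}=\bigoplus_{d_2}B^{d_1,d_2}_n$. Since $H$ is finite dimensional this filtration terminates, and the statement to be proved is exactly that the top level $N:=\max\{n:U_n\ne U_{n-1}\}$ satisfies $N>n_1$; equivalently, that the top layer $U_N/U_{N-1}$ contains no block $B^{d_1,d_2}_N$ with $d_2\ne d_1$.

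I would then argue by contradiction, assuming $n_1=N$ while $B^{d_1,d_2}_{n_1}\ne 0$ with $d_1\ne d_2$. First I record the symmetries already at hand: by $(\ref{eqgBd1d2n})$ the space $U$ is stable under left and right multiplication by $G(H)$, and by $(\ref{eqSBd1d2n})$ the antipode carries $U$ onto the companion space $U'=\bigoplus_{n,\,d'\ge 1}B^{d',d_1}_n$ of blocks whose \emph{right} type has dimension $d_1$. Since $\mathcal{S}$ is an anti-coalgebra map, $U'$ is a left coideal, $\dim U'=\dim U$, and by Proposition $\ref{propBd2d1ne0}$ its top level is again $N$, with socle $U'\cap H_0=C_{0,d_1}$. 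The idea is now to pair the top layer of $U$ against the socle of $U'$ and derive a dimension constraint.

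For this pairing I would exploit that a finite dimensional Hopf algebra is a Frobenius algebra: a nonzero right integral $\lambda\in H^*$ produces a nondegenerate bilinear form $\langle a,b\rangle=\lambda\!\big(a\,\mathcal{S}(b)\big)$ on $H$. The plan is to verify two compatibilities of this form: that through $\mathcal{S}$ it couples the left type of its first argument to the right type of its second, and that the integral property makes it vanish on $H_{N-1}$ in the first slot, so that $U_N/U_{N-1}$ is paired nondegenerately with the socle $C_{0,d_1}$ of $U'$. Because that socle is assembled from simple comodules of dimension exactly $d_1$, nondegeneracy would then force the right type of the top layer of $U$ to have dimension $d_1$, contradicting $B^{d_1,d_2}_N\ne 0$ with $d_2\ne d_1$ and giving $N>n_1$. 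I expect this last step to be the main obstacle: one must check precisely that the Frobenius form respects the left/right dimension grading and that it pairs the \emph{top} layer with the socle rather than with an intermediate layer. This is where finite dimensionality (existence of the integral) and the antipode are both indispensable, and it is also where the hypothesis $d_1\ne d_2$ is genuinely used, since for $d_1=d_2$ the top layer may legitimately be matched and no continuation is forced.
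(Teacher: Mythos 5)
Your setup is sound as far as it goes: $U=\rho_L^{-1}(C_{0,d_1}\otimes H)$ is indeed a right coideal by the coassociativity identity $(\rho_L\otimes\mathrm{id})\Delta=(\mathrm{id}\otimes\Delta)\rho_L$, the reduction of the proposition to the claim that the top layer of $U$ carries only blocks with $d_2=d_1$ is valid, and the Frobenius form $\lambda(a\,\mathcal{S}(b))$ is genuinely the mechanism underlying this circle of results. But the proof is not complete, and the two ``compatibilities'' you defer are not routine verifications --- they \emph{are} the theorem. First, the assertion that $\lambda$ vanishes on $H_{N-1}$, where $N$ is the top of the filtration of $U$ rather than of $H$, presupposes that $N$ equals the full Loewy length of $H$; equivalently, that every injective indecomposable summand of $H$ as a right comodule reaches the top of the coradical filtration with simple head. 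That statement (the Nakayama-permutation description of the injective hulls $E(V_\tau)$) is precisely what \cite[Lemma 3.8]{fukuda2008structure} packages, and it is the result the paper simply cites in its one-line proof of Proposition \ref{propBd1d3n2ne0}. So as written your argument is close to circular: the ``obstacle'' you flag at the end is the entire mathematical content, not a finishing check. Second, even granting the pairing, the coupling direction you state --- left type of the first argument to right type of the second --- imposes no constraint on $d_2$ at all, since every element of $U_N/U_{N-1}$ already has left type $d_1$ and every element of $C_{0,d_1}$ has right type $d_1$. To kill $B^{d_1,d_2}_N$ with $d_2\ne d_1$ you need the form to couple the \emph{right} type of $a$ to the left type of $\mathcal{S}(b)$, and you must also rule out the top layer pairing nontrivially against intermediate layers of $U'$; neither is established, and the orthogonality $\lambda\bigl(H_{N-1}\,\mathcal{S}(H_0)\bigr)=0$ again rests on the unproved vanishing above.

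For comparison, the paper's own proof is a direct citation: $B^{d_1,d_2}_{n_1}\ne 0$ yields some $Q^{\tau,\mu}_{n_1}\ne 0$ with $d_\tau=d_1\ne d_2=d_\mu$, and \cite[Lemma 3.8]{fukuda2008structure} then produces a simple subcoalgebra $E$ and $n_2\ge n_1+1$ with $Q^{\tau,E}_{n_2}\ne 0$, hence $B^{d_1,d_3}_{n_2}\ne 0$. Your proposal is in effect an attempt to reprove Fukuda's lemma from the Frobenius structure of $H^*$; that is a legitimate and instructive route (it is essentially how the lemma is proved in the literature), but to count as a proof you would need to supply the integral's vanishing on $H_{M-1}$ for $M$ the Loewy length of $H$, the equality of Loewy lengths of all injective indecomposable comodule summands, and the precise bicomodule-type behaviour of $a\otimes b\mapsto\lambda(a\,\mathcal{S}(b))$. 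Until then there is a genuine gap.
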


\begin{proof}
It is well known that $B^{d_1, d_2}_{n_1} \ne
0$ if and only if there exist $\tau, \mu \in \widehat{H}$, such that $d_\tau=d_1, d_\mu
=d_2,  Q^{\tau, \mu}_{n_1}\ne 0$. If $d_\tau \ne d_\mu$, then by \cite[Lemma 3.8]{fukuda2008structure}, there exist a simple subcoalgebra $E$ of $H$ and $n_2 \ge n_1+1$, such that $Q^{\tau, E}_{n_2} \ne 0$. Set $d_3=\dim E$, then $B^{d_1,
d_3}_{n_2}\ne 0$.
\end{proof}

Recall $\overline{\mathcal{L}}(v)$ and $\overline{\mathcal{R}}(v)$ defined in \ref{eqoverlineLwRw}. Denote the dual Hopf algebra of $H$ by $H^*$, and the dual basis of $W$ by $\{w^*\ |\ w\in W\}$, respectively.

\begin{lemma}\label{lmaintegral}
For any $u\in W\cap I$,
\begin{enumerate}
\item if $\rho_L(u)= 1\otimes u$, and $u\notin \overline{\mathcal{R}}(v)$ for any $v\in W$, then $u^*$ is a left integral of $H^*$ and there exists $g\in G(H)$ such that $\rho_R(u)=u\otimes g$;
\item if $\rho_R(u)= u\otimes 1$, and $u\notin \overline{\mathcal{L}}(v)$ for any $v\in W$, then $u^*$ is a right integral of $H^*$ and there exists $h\in G(H)$ such that $\rho_L(u)=h\otimes u$.
\end{enumerate}
\end{lemma}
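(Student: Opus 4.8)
The plan is to read off both conclusions from the three–term decomposition $\Delta(w)=\rho_L(w)+\rho_R(w)+\overline{\Delta}(w)$, valid for every $w\in I$. This identity holds by the definition of $\overline{\Delta}$ together with the fact that, since $\pi$ is a coalgebra projection, $(\pi\otimes\pi)\Delta(w)=\Delta_{C_0}(\pi(w))=0$ for $w\in I$; thus $\overline{\Delta}(w)\in I\otimes I$ lands exactly in the span appearing on the right-hand side of \eqref{eqolDeltaw}. First I would unwind the hypothesis $\rho_L(u)=1\otimes u$. Writing $u=w_{i,j}$ inside some block $V^{\tau,\mu}_{n,\lambda}$ and comparing with \eqref{eqrholwij}, linear independence of $\{w_{k,j}\}_{k=1}^{d_\tau}$ forces $d_\tau=1$ and $e^\tau_{1,1}=1$; that is, $\tau$ is the trivial group-like, $u=w_{1,j_0}$, and the block has the form $V^{1,\mu}_{n,\lambda}$.

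The core step is to prove that $u^*$ is a left integral of $H^*$. Translating the integral condition through the convolution product, $u^*$ is a left integral precisely when $(\mathrm{id}\otimes u^*)\Delta(a)=u^*(a)\,1$ for all $a\in H$; equivalently, in the expansion of $\Delta(a)$ over the basis $W\otimes W$, the element $u$ may occur in the right tensor slot only paired with $1$ on the left, and then with coefficient $u^*(a)$. It suffices to check this on basis elements $a\in W$. For $a\in W_0\subseteq C_0$ the image $\Delta(a)$ lies in $C_0\otimes C_0$, so $u\in I$ never occurs; for $a=w\in W_n$ I would scan the three summands of $\Delta(w)$. The term $\rho_R(w)$ lies in $C\otimes C_0$ and so cannot produce $u\in I$ on the right; the term $\overline{\Delta}(w)$ produces $u$ on the right exactly when $u\in\overline{\mathcal R}(w)$, which the hypothesis excludes for every $w$; and the term $\rho_L(w)=\sum_k e^\tau_{i,k}\otimes w_{k,j}$ from \eqref{eqrholwij} produces $u$ on the right only if $u=w_{k,j}$ lies in the same block as $w$. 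Since the block of $u$ has $d_\tau=1$, this last possibility collapses to $w=u$ with left factor $e^1_{1,1}=1$. Hence $u$ appears in the right slot of $\Delta(a)$ only for $a=u$, paired with $1$ and with coefficient $1=u^*(u)$, which is exactly the integral identity.

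With $u^*$ a left integral, I would extract $\rho_R(u)$ via the distinguished group-like element. Because the space of left integrals of $H^*$ is one-dimensional and $u^* * f$ is again a left integral for every $f\in H^*$, the assignment $f\mapsto\phi(f)$ defined by $u^* * f=\phi(f)\,u^*$ is an algebra map $H^*\to\Bbbk$, i.e. a group-like element $\mathbf g\in G(H)$, characterized by $(u^*\otimes\mathrm{id})\Delta(a)=u^*(a)\,\mathbf g$. Evaluating at $a=u$ and using the same three-term decomposition, the contributions of $\rho_L(u)=1\otimes u$ and of $\overline{\Delta}(u)$ (whose left factors lie in $P_{n-1}$, hence differ from $u$) vanish, while $\rho_R(u)=\sum_l w_{1,l}\otimes e^\mu_{l,j_0}$ contributes its $l=j_0$ term; thus $\mathbf g=e^\mu_{j_0,j_0}$. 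Since a simple comatrix coalgebra of dimension greater than $1$ carries no group-like element, $\mathbf g=e^\mu_{j_0,j_0}$ being group-like forces $d_\mu=1$; then $\mu$ is a group-like, $j_0=1$, and $\rho_R(u)=u\otimes g$ with $g=\mathbf g$.

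Part (2) follows by the mirror argument, interchanging the two sides: the hypotheses now place $u=w_{i_0,1}$ in a block $V^{\tau,1}_{n,\lambda}$, the same slot-tracking (using $\overline{\mathcal L}$ in place of $\overline{\mathcal R}$) gives the identity characterizing $u^*$ as a right integral, and the distinguished group-like on the opposite side yields $\rho_L(u)=h\otimes u$ with $h\in G(H)$. I expect the main obstacle to be the careful bookkeeping in the middle paragraph — namely verifying that $u$ can appear as a right tensor factor only for $a=u$, which requires simultaneously using $\rho_L(u)=1\otimes u$ (to collapse the $\rho_L$-contributions through $d_\tau=1$), the location of $\rho_R$ in $C\otimes C_0$, and the hypothesis on $\overline{\mathcal R}$ to kill the $\overline{\Delta}$-contributions.
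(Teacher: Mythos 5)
Your proposal is correct and follows essentially the same route as the paper: it establishes $f*u^*=f(1)u^*$ by tracking, via the decomposition $\Delta=\rho_L+\rho_R+\overline{\Delta}$, exactly when $u$ can occur in the right tensor slot of $\Delta(v)$ for $v\in W$, and then uses one-dimensionality of the space of left integrals to produce the group-like $g$ with $\rho_R(u)=u\otimes g$. Your third paragraph (the distinguished group-like element $\mathbf g$ and the computation $\mathbf g=e^\mu_{j_0,j_0}$ forcing $d_\mu=1$) spells out carefully a step the paper only asserts, which is a welcome addition rather than a deviation.
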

\begin{proof}
The proof of (2) is similar with the one of (1), so we just prove (1).
For any $w\in W$, consider the convolution product $w^**u^*$. For any $v\in W$,
since $u\notin \overline{R}(v)$, we have
$$
w^**u^*(v)=\left\{
\begin{array}{ll}
1& w=1, v=u, \\
0& \text{otherwise}.
\end{array}\right.
$$
Then $w^**u^*=w^*(1)u^*$. By the arbitrariness of $w$, $w^*$ may go through all the basis elements in $W^*$. That means for any $f\in H^*$, we have $f*u^*=f(1)u^*$. Then $u^*$is a left integral of $H^*$. Since the left integral space of $H^*$ is one dimensional, by (\ref{eqrhorwij}), there exists a group-like elements $g$, such that $\rho_R(u)=u\otimes
g$.
\end{proof}

Since $B^{1, 1}_0=\Bbbk G(H)$, then there exists $n\ge 0$, such that $B^{1, 1}_n\ne 0$. Set
\begin{equation}\label{B11mmax}
m^H=\max \{m\ |\ B^{1, 1}_m\ne 0\} \ .
\end{equation}

\begin{proposition}\label{propdimB11mH}
$\dim B^{1, 1}_{m^H} = |G(H)|$.
\end{proposition}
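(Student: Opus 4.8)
The plan is to combine the divisibility already furnished by Proposition~\ref{propBd1d2ndvdbyGH} with the integral criterion of Lemma~\ref{lmaintegral}, reducing the asserted equality to a one-dimensionality statement for integrals of $H^*$. The divisibility gives $|G(H)|\mid\dim B^{1,1}_{m^H}$, so the whole content is to show that the quotient is exactly $1$.

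First I would use the grouplike translation to fix the shape of $B^{1,1}_{m^H}$. By Definition~\ref{defbs}, $B^{1,1}_{m^H}=\bigoplus_{g,h\in G(H)}Q^{g,h}_{m^H}$, and since $gQ^{\tau,\mu}_n=Q^{g\tau,g\mu}_n$, left multiplication by $g^{-1}$ gives $C_0$-bicomodule isomorphisms $Q^{g,h}_{m^H}\cong Q^{1,g^{-1}h}_{m^H}$. Writing $U=\bigoplus_{h\in G(H)}Q^{1,h}_{m^H}$ for the part of trivial left type (so that $\rho_L(u)=1\otimes u$ on $U$) and summing these isomorphisms over $g,h$ yields $\dim B^{1,1}_{m^H}=|G(H)|\cdot\dim U$. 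Thus the proposition is equivalent to $\dim U=1$; since $m^H$ is defined so that $B^{1,1}_{m^H}\neq 0$, we already have $\dim U\geq 1$, and it remains to prove $\dim U\leq 1$.

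Next I would invoke Lemma~\ref{lmaintegral}(1). Every basis element $u$ of $U$ (taken from $W$) satisfies $\rho_L(u)=1\otimes u$ and $\rho_R(u)=u\otimes h$ for some $h\in G(H)$. If the second hypothesis of the lemma holds, namely $u\notin\overline{\mathcal{R}}(v)$ for every $v\in W$, then $u^*$ is a left integral of $H^*$. Because the left integral space of $H^*$ is one-dimensional while distinct dual basis vectors $u^*$ are linearly independent, at most one basis element can lie in $U$; hence $\dim U\leq 1$, which finishes the proof.

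The main obstacle is therefore verifying the non-appearance condition $u\notin\overline{\mathcal{R}}(v)$, and this is exactly where the maximality defining $m^H$ enters. I would argue by contradiction: if $u\in\overline{\mathcal{R}}(v)$, then $\overline{\Delta}(v)$ contains a nonzero term $x\otimes u$ with $x\in W\cap I$. By (\ref{eqolDeltaw}) the factors of $\overline{\Delta}(v)$ lie in $P_{\ell-1}$ when $v$ has coradical level $\ell$; since $u\in Q_{m^H}$ sits at level exactly $m^H$, this forces $\ell>m^H$. By the right $C_0$-comodule compatibility of the reduced coproduct (the factorization underlying Proposition~\ref{propnzdecom}), the right type of $v$ coincides with the right type $h$ of $u$, which is grouplike, so $v\in B^{d_1,1}_\ell$ for some $d_1\geq 1$. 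If $d_1=1$, then $B^{1,1}_\ell\neq 0$ with $\ell>m^H$, contradicting the maximality of $m^H$. If $d_1>1$, I would flip via Proposition~\ref{propBd2d1ne0} to get $B^{1,d_1}_\ell\neq 0$ and then iterate Proposition~\ref{propBd1d3n2ne0}, which keeps the first index equal to $1$ while strictly raising the level; finiteness of the coradical filtration forces this chain to stop, and it can only stop at a block $B^{1,1}_{n^*}$ with $n^*>m^H$, again contradicting the maximality of $m^H$. Both cases being excluded, the hypothesis of Lemma~\ref{lmaintegral}(1) holds for all $u\in U$, and the argument closes.
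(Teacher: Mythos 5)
Your proposal is correct and follows essentially the same route as the paper's proof: both verify the hypothesis of Lemma~\ref{lmaintegral} via the maximality of $m^H$ (using Propositions~\ref{propBd2d1ne0} and \ref{propBd1d3n2ne0}), conclude that the dual of a basis element of $Q^{1,h}_{m^H}$ is a left integral of $H^*$, and then use the one-dimensionality of the integral space together with the $G(H)$-action to count $\dim B^{1,1}_{m^H}$. The only differences are cosmetic: you perform the group-translation reduction to $\dim U$ at the outset rather than at the end, and you spell out the iteration of Proposition~\ref{propBd1d3n2ne0} that the paper leaves implicit.
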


\begin{proof}
By \cite[Proposition 3.2 (i)]{beattie2013techniques}, $m^H\ge
1$. For any $u\in W\cap B^{1, 1}_{m^H}$ with $\rho_L(u)= 1\otimes u$, there exists $h\in G(H)$, $\rho_R(u)= u\otimes h$.

We prove by contradiction that for any $w\in W$, $u\notin
\overline{\mathcal{R}}(w)$. Suppose that $u\in\overline{R}(w_0)$ for some nonzero $w_0\in W_H$. Then $\rho_R(w_0)= w_0\otimes h$. Then there exists $r_1>m^H$, such that $w_0\in B^{d, 1}_{r_1}$. By Proposition \ref{propBd1d3n2ne0}, there exists $r_2\ge r_1>m^H$, such that $B^{1, 1}_{r_2}\ne 0$. It is contradicted with the definition of $m^H$.

By Lemma \ref{lmaintegral}, $u^*$ is a left integral of $H^*$. Since the left integral space of $H^*$ is one-dimensional, then $|W^{1,
h}_{m^H}|=1$ and for any grouplike element $g\ne h$, we have $|W^{1, g}_{m^H}|=0$ (that means $\dim Q^{1, h}_{m^H}=1$ and $Q^{1, g}_{m^H}=0$). By acting $G(H)$ on $Q^{1,
h}_{m^H}$, we get $B^{1, 1}_{m^H}$. So $\dim B^{1, 1}_{m^H} =|G(H)|$.
\end{proof}

By the propositions above, we have the necessary conditions for non-cosemisimple coalgebras being admissible:

\begin{theorem}\label{thmnc1}
If a finite dimensional non-cosemisimple coalgebra $C$ is admissible, then every block system of $C$ must satisfy Proposition \ref{propBd1d2ndvdbyGH},Proposition \ref{propBd1nB1dndvdbydGH},Proposition \ref{propBd2d1ne0},Proposition \ref{propBd1d3n2ne0} and Proposition \ref{propdimB11mH}.
\end{theorem}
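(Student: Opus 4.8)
The plan is to read Theorem \ref{thmnc1} as a collation of the five propositions proved earlier in this section, transported from the Hopf-algebra setting to the purely coalgebraic one by the invariance recorded in Remark \ref{rmkdimblockinvariant}. The only genuine input is the meaning of \emph{admissible}: a finite dimensional non-cosemisimple coalgebra $C$ is admissible precisely when there is a finite dimensional Hopf algebra $H$ whose underlying coalgebra is (isomorphic to) $C$. So I would begin by fixing such an $H$.

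Since $C$ is non-cosemisimple, so is $H$, and therefore every construction of Section 1 applies to $H$: choosing the coideal $I$ and the complements $\{Q_n\}_{n\ge 1}$ supplied by $H$ yields a concrete block system $H=\bigoplus_{n\ge0,\,d_1,d_2\ge1}B^{d_1,d_2}_n$. Propositions \ref{propBd1d2ndvdbyGH}, \ref{propBd1nB1dndvdbydGH}, \ref{propBd2d1ne0}, \ref{propBd1d3n2ne0} and \ref{propdimB11mH} were each established for an arbitrary non-cosemisimple finite dimensional Hopf algebra, so they hold verbatim for this $H$, and hence for this particular block system of $C$.

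The remaining step is to promote these statements from one block system to every block system of $C$. Here I would observe that each of the five propositions is phrased solely in terms of quantities that Remark \ref{rmkdimblockinvariant} identifies as coalgebra invariants: the dimensions $\dim B^{d_1,d_2}_n$, the associated vanishing pattern $B^{d_1,d_2}_n\ne0$, the number $|G(H)|=\dim B^{1,1}_0$ of grouplike elements, and the index $m^H=\max\{m:B^{1,1}_m\ne0\}$. Since $\dim B^{d_1,d_2}_n=\dim B'^{d_1,d_2}_n$ for any two block systems induced by $(I,\{Q_n\})$ and $(I',\{Q'_n\})$, each divisibility relation, each nonvanishing implication, and the value of $m^H$ agree across all block systems of $C$. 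Consequently any condition satisfied by the block system coming from $H$ is satisfied by every block system of $C$, which is exactly the assertion of the theorem.

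I do not expect a serious obstacle here; the mathematical content lies in the five propositions themselves. The one point to check carefully is that nothing in the propositions secretly depends on the Hopf-algebra data beyond the coalgebra invariants above — in particular that $|G(H)|$ and $m^H$ may legitimately be read off from $C$ alone via $\dim B^{1,1}_0$ and the support of $\{\dim B^{1,1}_m\}_m$. Verifying this bookkeeping, rather than producing any new estimate, is the whole of the work.
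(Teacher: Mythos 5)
Your proposal is correct and follows essentially the same route as the paper: admissibility supplies a Hopf algebra structure on $C$, and the five propositions, proved for arbitrary finite dimensional non-cosemisimple Hopf algebras, then apply. The paper dispenses with your final invariance step by observing that every block system of $C$ is literally a block system of the resulting Hopf algebra (the construction uses only the coalgebra structure), but your transfer via Remark \ref{rmkdimblockinvariant} is a harmless and valid alternative way to handle the ``every block system'' quantifier.
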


\begin{proof}
See that $C$ being admissible means that there is a Hopf algebra structure compatible with the coalgebra structure of $C$. Then every block system of $C$ is actually a block system of a Hopf algebra, which must satisfy Proposition \ref{propBd1d2ndvdbyGH},Proposition \ref{propBd1nB1dndvdbydGH},Proposition \ref{propBd2d1ne0},Proposition \ref{propBd1d3n2ne0} and Proposition \ref{propdimB11mH}.
\end{proof}

See that $P^{\tau,\mu}_n$ is non-degenerate if and only if $Q^{\tau,\mu}_n \ne 0$ for $\tau,\mu \in \hat {H}$ and $n\ge 1$. Rewrite some results in \cite{beattie2013techniques} and \cite{fukuda2008structure}.

\begin{proposition}\label{prop1}
For $\tau,\mu \in \hat {H}$:
\begin{enumerate}
\item \cite[Lemma 3.2]{fukuda2008structure} For some $n>1$, if $Q^{\tau,\mu}_n \ne 0$, then there exists a set of simple subcoalgebras $\{D_1, \cdots,D_{n-1}\}$ such that $Q^{\tau,D_i}_i \ne 0$ and $Q^{D_i,\mu}_i \ne 0$ for all $1\le i\le n-1$.
\item \cite[Lemma 3.5]{fukuda2008structure} Let $\mathcal{S}\tau$ denote the simple subcoalgebra $\mathcal{S}(D_\tau)$ and $g\tau$ (resp. $\tau g$) denote the simple subcoalgebra $gD_\tau$ (resp. $D_\tau g$) for $g\in G(H)$.  Then $\dim Q^{\tau,\mu}_n = \dim Q^{\mathcal{S}\tau,\mathcal{S}\mu}_n =\dim Q^{g\tau,g\mu}_n =\dim Q^{\tau g,\mu g}_n $, for $n\ge1$.
\item \cite[Lemma 3.8]{fukuda2008structure} Assume that $Q^{\tau,\mu}_{n_1}\ne 0$ for some $n_1\ge1$. If $d_\tau \ne d_\mu$ or $\dim Q^{\tau,\mu}_{n_1}\ne d_\tau^2$, then there exists a simple subcoalgebra $E$ of $H$ such that $Q^{\tau,E}_{n_2} \ne 0$ for some $n_2 \ge n_1+1$.
\end{enumerate}
\end{proposition}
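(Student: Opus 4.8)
The three parts are the block-system translations of \cite[Lemmas 3.2, 3.5, 3.8]{fukuda2008structure}, so my plan is first to fix the dictionary and then to argue each part inside the associated graded coalgebra $\mathrm{gr}\,H=\bigoplus_{n\ge0}H_n/H_{n-1}$. Since $\mathrm{gr}\,H$ is coradically graded with degree-zero part $C_0$ and $n$-th graded piece isomorphic to $Q_n$ as a $C_0$-bicomodule, and since (as noted just above) $P^{\tau,\mu}_n$ is non-degenerate exactly when $Q^{\tau,\mu}_n\ne0$, each assertion about the $Q^{\tau,\mu}_n$ becomes a statement about the isotypic components of $\mathrm{gr}\,H$, matching Fukuda's setting. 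Throughout I will use that the comultiplication of $\mathrm{gr}\,H$ respects the grading and is a $C_0$-bicomodule map.

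For (1) I would work with the fully iterated reduced coproduct. On a nonzero degree-$n$ element it produces a nonzero element of $Q_1^{\otimes n}$, since coradical-gradedness gives an injection $\mathrm{gr}_n\hookrightarrow \mathrm{gr}_1^{\otimes n}$. By bicomodule compatibility the simple-tensor summands have types forming chains $\tau=\nu_0,\nu_1,\dots,\nu_n=\mu$ with each link $Q^{\nu_{k-1},\nu_k}_1\ne0$. Re-associating the iteration into a head of length $i$ and a tail of length $n-i$ then exhibits an intermediate coalgebra witnessing $Q^{\tau,\cdot}_i\ne0$, and the mirror split witnesses $Q^{\cdot,\mu}_i\ne0$. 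The delicate point is to reconcile these into a single $D_i$ that works on both sides at each level $i$, rather than two a priori distinct intermediates; here I would follow the finer bookkeeping of \cite[Lemma 3.2]{fukuda2008structure}.

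For (2) I would exploit that left and right translation $L_g\colon c\mapsto gc$ and $R_g\colon c\mapsto cg$ by a group-like $g$ are coalgebra automorphisms, while the antipode $\mathcal S$ is a coalgebra anti-automorphism. All three are bijective, hence preserve the coradical filtration and descend to isomorphisms of the graded pieces. Tracking the $C_0$-bicomodule types, $L_g$ and $R_g$ carry the $(\tau,\mu)$-isotypic component isomorphically onto the $(g\tau,g\mu)$- respectively $(\tau g,\mu g)$-component, while $\mathcal S$ reverses the coproduct and correspondingly interchanges and dualizes the two sides; because all the spaces are finite dimensional, these transports yield the stated equalities of dimensions rather than mere injections.

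Part (3) is the substantive one and the step I expect to be the main obstacle, since the purely coalgebraic bookkeeping of (1)–(2) no longer suffices. One must show that unless the block at $(\tau,\mu)$ is the closed-square case $d_\tau=d_\mu$ with $\dim Q^{\tau,\mu}_{n_1}=d_\tau^{\,2}$, the coradical filtration is forced to grow in the $\tau$-direction, so that some $Q^{\tau,E}_{n_2}$ with $n_2\ge n_1+1$ is nonzero. My plan is to follow Fukuda: assemble the links emanating from $\tau$ into the iterated wedge $D_\tau\wedge D_\tau\wedge\cdots$ and use finite-dimensionality together with genuine Hopf-algebra input—the bijectivity of $\mathcal S$ and the non-degeneracy of the integral pairing—to rule out premature stabilization when the block fails to be maximal. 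The crux is precisely quantifying when a given isotypic direction of the filtration terminates, which is the Hopf-theoretic content behind \cite[Lemma 3.8]{fukuda2008structure} and the step that cannot be reduced to bicomodule semisimplicity alone.
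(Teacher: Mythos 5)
The paper offers no proof of this proposition at all — it is stated purely as a citation of Fukuda's Lemmas 3.2, 3.5 and 3.8 — and your outline, which supplies a correct coradically-graded-coalgebra argument for (1) (the injectivity $\mathrm{gr}_n\hookrightarrow\mathrm{gr}_1^{\otimes n}$ and the middle-type bookkeeping, where in fact the single split $\mathrm{gr}_n\to\mathrm{gr}_i\otimes\mathrm{gr}_{n-i}$ decomposed by middle $C_0$-type already yields one common $D_i$, so the reconciliation you worry about is automatic), the standard (anti)automorphism argument for (2), and an honest deferral to Fukuda for the genuinely Hopf-theoretic part (3), is consistent with the paper's treatment. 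Note only that your argument for (1) naturally produces $Q^{D_i,\mu}_{n-i}\ne 0$ rather than $Q^{D_i,\mu}_{i}\ne 0$, which agrees with Proposition \ref{propnzdecom} and Corollary \ref{cor3}(1) and indicates a typo in the statement as printed.
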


\begin{proposition}\cite[Proposition 3.2 (i)]{beattie2013techniques}\label{prop2}
If $H$ is a non-cosemisimple Hopf algebra with no nontrivial skew-primitives, then for any $g\in G(H)$,
\begin{enumerate}
\item there exists $h\in G(H)$, such that $Q^{g,h}_m\ne 0$ for some $m>1$;
\item there exist $\tau,\mu \in \hat {H}$ with $d_\tau=d_\mu > 1$, such that $Q^{\tau,g}_1\ne 0$, $Q^{\tau,\mu}_k\ne0$ for some $k>1$.
\end{enumerate}
\end{proposition}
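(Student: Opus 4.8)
The plan is to establish (1) via the theory of integrals and then deduce (2) from (1) together with the ``rules'' of Proposition \ref{prop1}. For (1) I would first reduce to the case $g=1$: once I find a grouplike $h_0$ and an $m>1$ with $Q^{1,h_0}_m\ne 0$, the grouplike-invariance of block dimensions (Proposition \ref{prop1}(2)) gives $Q^{g,gh_0}_m\ne 0$ for every $g\in G(H)$, so $h=gh_0$ works. To produce $h_0$, let $\Lambda$ span the one-dimensional space of left integrals of the dual Hopf algebra $H^*$. Because $H$ is non-cosemisimple, $H^*$ is non-semisimple, so the Maschke-type criterion \cite{Larson1988Finite} gives $\varepsilon_{H^*}(\Lambda)=\Lambda(1_H)=0$. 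Since the left integrals form a one-dimensional two-sided ideal and $\Lambda^2=\Lambda(1_H)\Lambda=0$, this ideal is nilpotent, hence contained in $\mathrm{rad}(H^*)=C_0^{\perp}$; thus $\Lambda$ vanishes on the coradical $C_0$.

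Assuming for the moment that $\Lambda$ is concentrated at the top level $M$ of the coradical filtration, I would read off its block. Evaluating the integral identity $(id\otimes\Lambda)\Delta(c)=\Lambda(c)\,1_H$ on a support vector $w=w_{i,j}\in Q^{\tau,\mu}_M$ of $\Lambda$, the contributions of $\rho_R(w)$ lie in $C_0$ and those of $\overline{\mathit\Delta}(w)$ lie in $C_{M-1}$, and all are annihilated by $\Lambda$; what remains is $\sum_k \Lambda(w_{k,j})\,e^{\tau}_{i,k}=\Lambda(w_{i,j})\,1_H$. The left-hand side lies in $D_\tau$ while $1_H$ generates the unit subcoalgebra, so by independence of distinct simple subcoalgebras $\tau$ must be the trivial grouplike; that is, $\rho_L(w)=1\otimes w$. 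As $w$ is at the top level it cannot occur in any $\overline{\mathit\Delta}(v)$, so $w\notin\overline{\mathcal R}(v)$ for all $v$, and Lemma \ref{lmaintegral}(1) yields a grouplike $g_0$ with $\rho_R(w)=w\otimes g_0$. Hence $w\in Q^{1,g_0}_M$, so $Q^{1,g_0}_M\ne 0$. Finally, the hypothesis that $H$ has no nontrivial skew-primitives says exactly that $Q^{g,h}_1=0$ for all grouplikes $g,h$, so $M\ne 1$; therefore $m:=M>1$, which completes (1).

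The main obstacle is the deferred claim that $\Lambda$ may be taken concentrated at the top coradical level, equivalently that the integral lies in the top nonzero power of $\mathrm{rad}(H^*)$. I would establish this by passing to the associated graded of the radical filtration of $H^*$, which is dual to the coradically graded coalgebra of $H$, and invoking that the integral of a finite-dimensional graded Hopf algebra sits in top degree; alternatively one may cite the corresponding statement in \cite{beattie2013techniques}. The $\rho_L$-computation of the previous paragraph then shows this top vector has trivial left comodule structure, so the argument goes through.

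For (2) I would bootstrap from (1). Applying (1) to $g^{-1}$ produces a grouplike $h$ and $m>1$ with $Q^{g^{-1},h}_m\ne 0$. The first stage ($i=1$) of the path decomposition in Proposition \ref{prop1}(1) gives a simple subcoalgebra $D_1$ with $Q^{g^{-1},D_1}_1\ne 0$; since there are no nontrivial skew-primitives, $D_1$ is not grouplike, so $d_{D_1}>1$. Applying the antipode (Proposition \ref{prop1}(2)) gives $Q^{\mathcal S D_1,\,g}_1\ne 0$, so setting $\tau=\mathcal S D_1$ we obtain $d_\tau>1$ and $Q^{\tau,g}_1\ne 0$, the first required condition. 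For the second, I would iterate Proposition \ref{prop1}(3) starting from $Q^{\tau,g}_1$: as $d_\tau\ne d_g=1$ its hypothesis holds and yields a nonzero $Q^{\tau,E}_{k}$ with $k\ge 2$, and as long as the hypothesis persists I keep producing nonzero blocks with left index $\tau$ at strictly increasing levels. Since the coradical filtration of the finite-dimensional $H$ is finite, the process must stop, and it can only stop at a block $Q^{\tau,\mu}_{k}$ where the hypothesis of Proposition \ref{prop1}(3) fails, that is $d_\mu=d_\tau>1$ (and $\dim Q^{\tau,\mu}_k=d_\tau^2$). This $\mu$ with $k>1$ gives the second condition, so I expect (2) to be routine once (1) is available.
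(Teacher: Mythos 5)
The paper does not prove this proposition at all --- it is quoted verbatim from \cite{beattie2013techniques} --- so your attempt has to stand on its own. Your derivation of (2) from (1) is sound: the $i=1$ stage of Proposition \ref{prop1}(1) together with the absence of nontrivial skew-primitives forces $d_{D_1}>1$, the antipode moves the grouplike into the right-hand slot, and the iteration of Proposition \ref{prop1}(3), which can only terminate at a block with $d_\mu=d_\tau$, does produce the required $\mu$ and $k>1$. The problem is in (1), concentrated exactly where you flagged it: the claim that the left integral $\Lambda$ of $H^*$ vanishes on $H_{M-1}$, with $M$ the length of the coradical filtration. Your proposed justification does not go through: $\mathrm{gr}(H)$ with respect to the coradical filtration is a Hopf algebra only when $H_0$ is a Hopf subalgebra, which is not guaranteed for non-pointed $H$; and even when it is, the integral of $\mathrm{gr}(H^*)$ is not the principal symbol of $\Lambda$ --- that the symbol of $\Lambda$ sits in top filtration degree is precisely what you would need to prove. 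Worse, the claim itself is doubtful: by Proposition \ref{propdimB11mH} the integral is (proportional to) $u^*$ for a basis vector $u$ of $B^{1,1}_{m^H}$ with $m^H=\max\{m\ |\ B^{1,1}_m\ne0\}$, and nothing forces $m^H=M$; the paper's Figure~1 explicitly allows the top of the coradical filtration to be a block $B^{d,d}_k$ with $d>1$ and $k>m^H$, in which case $\Lambda$ does \emph{not} vanish on $H_{M-1}$.

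The argument can be repaired with less. Take $n$ \emph{minimal} with $\Lambda|_{H_n}\ne0$; then $n\ge1$ because $\Lambda$ kills $C_0$ (your nilpotent-ideal argument is fine), and for a basis vector $w=w_{i,j}$ of a simple summand of $Q^{\tau,\mu}_n$ on which $\Lambda$ is nonzero, the terms of $\overline{\mathit\Delta}(w)$ lie in $P_{n-1}\otimes P_{n-1}$ and are killed by $\Lambda$ by minimality of $n$, so your $\rho_L$-computation still yields $\tau=1$. At a non-top level you cannot invoke Lemma \ref{lmaintegral}, since $w$ may well lie in some $\overline{\mathcal{R}}(v)$; instead use that $\Bbbk\Lambda$ is a two-sided ideal of $H^*$, so that $\Lambda*f=f(a)\Lambda$ for the distinguished grouplike $a\in G(H)$, and the symmetric computation $(\Lambda\otimes id)\Delta(w)=\Lambda(w)a$ forces $\mu=a$. (Alternatively, from $Q^{1,\mu}_n\ne0$ alone one can iterate Proposition \ref{prop1}(3) exactly as in your part (2) until the right-hand index becomes grouplike.) Then $Q^{1,a}_n\ne0$, the no-skew-primitive hypothesis gives $n>1$, and translation by $G(H)$ as in your first paragraph finishes (1).
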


\begin{remark}\label{rmk1}
A pointed Hopf subalgebras is called nontrivial if it is not a group algebra. By \cite[Proposition 1.8]{andruskiewitsch2001counting}, $H$ has no nontrivial skew-primitives if and only if it has no nontrivial pointed Hopf subalgebras. Following \cite[Lemma 2.8]{beattie2013classifying}, if $(|G(H)|, \dim H / |G(H)|)=1$, then $H$ has no nontrivial skew-primitives. An example is that $\dim H$ is free of squares.
\end{remark}

The following results show properties of the blocks of a Hopf algebra, which should be considered as `rules' for block systems.

\begin{corollary}\label{cor1}
For blocks of $H$ and $g\in G(H)$, we have
$$
gB^{d_1,d_2}_n = B^{d_1,d_2}_n g= B^{d_1,d_2}_n, \ \text{and}\
\mathcal{S}(B^{d_1,d_2}_n ) = B^{d_2,d_1}_n,
$$
with $n\ge0$ and $d_1,d_2\ge1$.

\end{corollary}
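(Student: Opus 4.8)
The plan is to recognize that this Corollary merely collects equations (\ref{eqgBd1d2n}) and (\ref{eqSBd1d2n}), which were already recorded above, so I would reduce everything to those two identities together with the behaviour of the isotypic pieces $Q^{\tau,\mu}_n$ under the structure maps. By Definition \ref{defbs}, for $n,d_1,d_2\ge 1$ the block is the direct sum $B^{d_1,d_2}_n=\bigoplus_{\tau,\mu\in\hat H,\,d_\tau=d_1,\,d_\mu=d_2}Q^{\tau,\mu}_n$, while for $n=0$ it equals $C_{0,d}$ when $d_1=d_2=d$ and is $0$ otherwise. Thus each assertion is proved by distributing the relevant map across the direct sum and tracking how it permutes the indexing set.

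For the grouplike invariance I would use the identity $gQ^{\tau,\mu}_n=Q^{g\tau,g\mu}_n$. Left multiplication by $g$ is a coalgebra isomorphism of $H$ carrying the simple subcoalgebra $D_\tau$ onto $gD_\tau=D_{g\tau}$; hence it permutes $\hat H$ bijectively and preserves dimensions, so $d_{g\tau}=d_\tau$. Consequently, as $(\tau,\mu)$ runs over all pairs with $d_\tau=d_1,\,d_\mu=d_2$, so does $(g\tau,g\mu)$, and $gB^{d_1,d_2}_n=\bigoplus Q^{g\tau,g\mu}_n$ is exactly the same direct sum reindexed, giving $gB^{d_1,d_2}_n=B^{d_1,d_2}_n$. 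The same argument with right multiplication (using $Q^{\tau,\mu}_n g=Q^{\tau g,\mu g}_n$ and $d_{\tau g}=d_\tau$) yields $B^{d_1,d_2}_n g=B^{d_1,d_2}_n$, and the $n=0$ case is immediate since $g$ permutes the grouplikes and fixes $C_{0,d}$ setwise.

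For the antipode I would distribute $\mathcal{S}$ over the same decomposition via $\mathcal{S}(Q^{\tau,\mu}_n)=Q^{\mathcal{S}\mu,\mathcal{S}\tau}_n$. Since $\mathcal{S}$ is an anti-coalgebra map restricting to a dimension-preserving bijection of $\hat H$ (so $d_{\mathcal{S}\tau}=d_\tau$), the pair $(\mathcal{S}\mu,\mathcal{S}\tau)$ runs over all pairs whose first index has dimension $d_2$ and whose second has dimension $d_1$; reindexing the sum therefore gives $\mathcal{S}(B^{d_1,d_2}_n)=B^{d_2,d_1}_n$, with the $n=0$ case following from $\mathcal{S}(C_{0,d})=C_{0,d}$.

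The only substantive content is the two ``obvious'' identities $gQ^{\tau,\mu}_n=Q^{g\tau,g\mu}_n$ and $\mathcal{S}(Q^{\tau,\mu}_n)=Q^{\mathcal{S}\mu,\mathcal{S}\tau}_n$, and so the main obstacle is justifying them: one must check that multiplication by $g$ and the antipode preserve the coradical filtration (so that the graded pieces $Q_n\cong C_n/C_{n-1}$ are carried to graded pieces) and respect the $C_0$-bicomodule isotypic decomposition. These are precisely the dimension identities recorded in Proposition \ref{prop1}(2), so once they are invoked the remainder is bookkeeping on indices, and I expect no genuine difficulty beyond verifying that the permutations of $\hat H$ induced by $g\cdot(-)$, $(-)\cdot g$ and $\mathcal{S}$ each preserve $d_\tau$.
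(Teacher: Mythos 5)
Your proposal is correct and follows essentially the same route as the paper: both reduce the claim to the identities $gQ^{\tau,\mu}_n=Q^{g\tau,g\mu}_n$ and $\mathcal{S}(Q^{\tau,\mu}_n)=Q^{\mathcal{S}\mu,\mathcal{S}\tau}_n$ together with the fact that $g\cdot(-)$, $(-)\cdot g$ and $\mathcal{S}$ send simple subcoalgebras to simple subcoalgebras of the same dimension (Proposition \ref{prop1}(2)). The only cosmetic difference is that you conclude by reindexing the direct sum via the induced permutation of $\hat H$, whereas the paper establishes the two inclusions separately using $g^{-1}$ and $\mathcal{S}^{-1}$.
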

\begin{proof}
The image of a simple subcoalgebra of $H$ under $\mathcal{S}$ or left (right) action of $g\in G(H)$ is still a simple one of the same dimension. Hence the proposition is correct for $n=0$.

$\mathcal{S}$, $\mathcal{S}^{-1}$ and left action of $g\in G(H)$ are bijections and keep the coradical filtration. For $n\ge1$, following the definition of $Q^{\tau,\mu}_n$, we have $gQ^{\tau,\mu}_n = Q^{g\tau,g\mu}_n$ and $\mathcal{S}(Q^{\tau,\mu}_n)=Q^{\mathcal{S}\mu,\mathcal{S}\tau}_n$. By Proposition \ref{prop1}(2), $gQ^{\tau,\mu}_n \subseteq B^{d_1,d_2}_n$ and $\mathcal{S}(Q^{\tau,\mu}_n)\subseteq B^{d_2,d_1}_n$  with $d_\tau=d_1, d_\mu=d_2$, which leads to $gB^{d_1,d_2}_n\subseteq B^{d_1,d_2}_n$ and $\mathcal{S}(B^{d_1,d_2}_n ) \subseteq B^{d_2,d_1}_n$. Similarly $g^{-1}B^{d_1,d_2}_n\subseteq B^{d_1,d_2}$ and $\mathcal{S}^{-1}(B^{d_1,d_2}_n ) \subseteq B^{d_2,d_1}_n$.
Hence $gB^{d_1,d_2}_n = B^{d_1,d_2}_n$ and $\mathcal{S}(B^{d_1,d_2}_n ) = B^{d_2,d_1}_n$.

Similar for $B^{d_1,d_2}_n g= B^{d_1,d_2}_n$.
\end{proof}

\begin{corollary}\label{cor2}
For blocks of $H$, $\dim B^{1,1}_0=|G(H)|$ divides the dimension of every block of $H$.
\end{corollary}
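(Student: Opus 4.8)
The plan is to observe that this corollary is essentially the divisibility statement already recorded in Proposition \ref{propBd1d2ndvdbyGH}, so the work reduces to confirming the identification $\dim B^{1,1}_0 = |G(H)|$ and then viewing every block as a free $\Bbbk G(H)$-module. First I would note that, by Definition \ref{defbs}, $B^{1,1}_0 = C_{0,1} = \bigoplus_{g\in G(H)} D_g$, which is exactly the group algebra $\Bbbk G(H)$; hence $\dim B^{1,1}_0 = |G(H)|$.

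The key tool is the Nichols-Zoeller freeness theorem applied to the Hopf subalgebra $\Bbbk G(H) \subseteq H$. By Corollary \ref{cor1} (equivalently (\ref{eqgBd1d2n})), each block $B^{d_1,d_2}_n$ is stable under the left action of $G(H)$, so it is a $\Bbbk G(H)$-module. For $n=0$ the block $B^{d_1,d_2}_0$ is a $G(H)$-stable subcoalgebra, so it is a free $\Bbbk G(H)$-module by Nichols-Zoeller, and its dimension is a multiple of $|G(H)|$. For $n\ge 1$ I would not expect the block itself to be a Hopf submodule, so the argument should instead pass through the coradical filtration: $H_{n-1}$ and $H_{n-1}\oplus B^{d_1,d_2}_n$ are both right $(H,\Bbbk G(H))$-Hopf modules, hence both free over $\Bbbk G(H)$, and subtracting ranks shows that $\dim B^{d_1,d_2}_n$ is divisible by $|G(H)|$.

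The main obstacle, and the only place requiring care, is the $n\ge 1$ case: one must verify that the direct sum $H_{n-1}\oplus B^{d_1,d_2}_n$ genuinely carries a right $(H,\Bbbk G(H))$-Hopf module structure compatible with the one on $H_{n-1}$, so that the theorem applies to both and the difference of two free modules of the same type is again free. Since all of this is precisely the content of Proposition \ref{propBd1d2ndvdbyGH}, the cleanest write-up simply invokes that proposition after recording the equality $\dim B^{1,1}_0 = |G(H)|$.
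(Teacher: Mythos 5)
Your proposal is correct and follows essentially the same route as the paper: the paper's own proof of Corollary \ref{cor2} likewise handles $n=0$ by applying the Nichols--Zoeller theorem to the $G(H)$-stable subcoalgebra $B^{d_1,d_2}_0$, and handles $n\ge 1$ by showing $H_{n-1}$ and $H_{n-1}\oplus B^{d_1,d_2}_n$ are both $(H,\Bbbk G(H))$-Hopf modules (hence free) and subtracting dimensions, exactly as in Proposition \ref{propBd1d2ndvdbyGH}, which you rightly observe already contains the whole statement. The only cosmetic difference is that you work with right Hopf modules where the paper's Corollary \ref{cor2} proof uses left ones.
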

\begin{proof}
Let $B^{d_1,d_2}_n$ be a block of $H$. By Corollary \ref{cor1}, every block of $H$ is stable under the left multiplication of $g\in G(H)$.

For $n=0$, $B^{d_1,d_2}_0$ with $d_1=d_2$ is a left $(H,\Bbbk G(H))$-Hopf module with comultiplication of $H$ as comodule map. Then by the Nichols-Zoeller Theorem, $B^{d_1,d_2}_0$ is a free left $\Bbbk G(H)$-module. Then $|G(H)|$ divides the dimension of $B^{d_1,d_2}_0$.

For $n>0$, we have
\begin{align*}
\Delta(B^{d_1,d_2}_n)&\subset H_0\otimes (B^{d_1,d_2}_n)+  (B^{d_1,d_2}_n)\otimes H_0 +\sum_{1\le i \le n-1} P_i \otimes P_{n-i}\\
&\subset H_0\otimes (B^{d_1,d_2}_n)+  (B^{d_1,d_2}_n)\otimes H_0 + H_{n-1}\otimes H_{n-1}.
\end{align*}
See that $H_{n-1}$ and $B^{d_1,d_2}_n\oplus H_{n-1}$ are left $(H,\Bbbk G(H))$-Hopf modules with left multiplication of $\Bbbk G(H)$ as module map and comultiplication of $H$ as comodule map. Then we have $H_{n-1}$ and $B^{d_1,d_2}_n\oplus H_{n-1}$ are free over $\Bbbk G(H)$ and $|G(H)|$ divides the dimensions of $H_{n-1}$ and $B^{d_1,d_2}_n\oplus H_{n-1}$, which leads to the fact that $B^{d_1,d_2}_n$ is free over $\Bbbk G(H)$ and $|G(H)|$ divides the dimension of $B^{d_1,d_2}_n$.
\end{proof}

Every nonzero $Q^{\tau,\mu}_n$ leads to a nonzero block $B^{d_1,d_2}_n$ with $d_1 =d_\tau, d_2 =d_\mu$. Hence the following corollary is a direct result from Proposition \ref{prop1}.

\begin{corollary}\label{cor3}
For blocks of $H$, we have the following results.
\begin{enumerate}
\item If $B^{d_1,d_2}_n \ne 0$ for some $n>1$, then there exists a set of positive integers $\{b_1, \cdots,b_{n-1}\}$ such that $B^{d_1,b_i}_i \ne 0$ and $B^{b_i,d_2}_{n-i} \ne 0$ for all $i = 1,\cdots, n-1$.
\item If $B^{d_1,d_2}_n \ne 0$ for some $n\ge 1$, then $B^{d_2,d_1}_n \ne 0$.
\item Assume that $B^{d_1,d_2}_{n_1} \ne 0$ for some $n_1 \ge1$. If $d_1 \ne d_2$, then there exists positive integers $d_3$ such that $B^{d_1,d_3}_{n_2}\ne 0$ for some $n_2\ge n_1+1$.
\end{enumerate}
\end{corollary}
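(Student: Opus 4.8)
The plan is to transport each assertion across the dictionary supplied by Definition \ref{defbs}: since
$$
B^{d_1,d_2}_n=\bigoplus_{\substack{\tau,\mu\in\widehat{H}\\ d_\tau=d_1,\ d_\mu=d_2}}Q^{\tau,\mu}_n ,
$$
the block $B^{d_1,d_2}_n$ is nonzero exactly when there exist simple subcoalgebras $\tau,\mu$ with $d_\tau=d_1$, $d_\mu=d_2$ and $Q^{\tau,\mu}_n\ne 0$. I will also use that $\mathcal{S}$ and left/right multiplication by any $g\in G(H)$ carry a simple subcoalgebra to a simple subcoalgebra of the same dimension, so at the level of blocks these operations can only permute the superscripts $d_1,d_2$, never alter their values. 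Granting this, every part of the corollary reduces to the corresponding part of Proposition \ref{prop1}.

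For (1), choose $\tau,\mu$ realising $B^{d_1,d_2}_n\ne 0$ and apply Proposition \ref{prop1}(1) to the nonzero space $Q^{\tau,\mu}_n$. This yields simple subcoalgebras $D_1,\dots,D_{n-1}$ with $Q^{\tau,D_i}_i\ne 0$ and $Q^{D_i,\mu}_{n-i}\ne 0$, splitting a length-$n$ connection through the intermediate subcoalgebra $D_i$ into a length-$i$ part and a length-$(n-i)$ part. Setting $b_i:=d_{D_i}$ and returning through the dictionary gives $B^{d_1,b_i}_i\ne 0$ and $B^{b_i,d_2}_{n-i}\ne 0$, which is (1) (and recovers Proposition \ref{propnzdecom}). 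For (3), again pick $\tau,\mu$ realising the nonzero block; since $d_1\ne d_2$ we have $d_\tau\ne d_\mu$, so the hypothesis of Proposition \ref{prop1}(3) is met and produces a simple subcoalgebra $E$ with $Q^{\tau,E}_{n_2}\ne 0$ for some $n_2\ge n_1+1$. Taking $d_3:=d_E$ gives $B^{d_1,d_3}_{n_2}\ne 0$, as required.

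Part (2) is where the only genuine care is needed, and it concerns the \emph{order} of the superscripts. The antipode is a bijective anti-coalgebra map, so it restricts to a linear isomorphism $Q^{\tau,\mu}_n\xrightarrow{\ \sim\ }Q^{\mathcal{S}\mu,\mathcal{S}\tau}_n$, the pair being reversed because $\mathcal{S}$ interchanges the left and right $C_0$-comodule structures (this is exactly the relation $\mathcal{S}(Q^{\tau,\mu}_n)=Q^{\mathcal{S}\mu,\mathcal{S}\tau}_n$ recorded in Section 2). Hence $Q^{\tau,\mu}_n\ne 0$ forces $Q^{\mathcal{S}\mu,\mathcal{S}\tau}_n\ne 0$, and from $d_{\mathcal{S}\mu}=d_2$, $d_{\mathcal{S}\tau}=d_1$ we obtain $B^{d_2,d_1}_n\ne 0$; equivalently one may simply cite Proposition \ref{propBd2d1ne0}. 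I expect no real obstacle beyond this bookkeeping, since all the substantive content lives in Proposition \ref{prop1} (Fukuda's lemmas), which I am assuming. The two points to watch are precisely this superscript reversal in (2) --- one must not use the superscript-\emph{preserving} identity $\dim Q^{\tau,\mu}_n=\dim Q^{\mathcal{S}\tau,\mathcal{S}\mu}_n$ of Proposition \ref{prop1}(2), which would only reproduce $B^{d_1,d_2}_n\ne 0$ --- and the subscript split $i,\,n-i$ in (1), which must be extracted from Proposition \ref{prop1}(1) in the form $Q^{D_i,\mu}_{n-i}\ne 0$.
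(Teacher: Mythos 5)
Your proof is correct and takes essentially the same route as the paper, which simply observes that a block $B^{d_1,d_2}_n$ is nonzero exactly when some $Q^{\tau,\mu}_n$ with $d_\tau=d_1$, $d_\mu=d_2$ is nonzero and then cites Proposition \ref{prop1}. The extra care you take is warranted: the paper's printed Proposition \ref{prop1}(1) misstates the second subscript as $i$ rather than $n-i$, and part (2) really rests on the order-reversing identity $\mathcal{S}(Q^{\tau,\mu}_n)=Q^{\mathcal{S}\mu,\mathcal{S}\tau}_n$ (equivalently Proposition \ref{propBd2d1ne0}) rather than on the superscript-preserving equality of Proposition \ref{prop1}(2) as printed.
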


\begin{remark}\label{rmk2}
Since we always assume that $H$ is of finite dimension, then by Corollary \ref{cor3}(2),(3), $B^{d_1,d_2}_n\ne 0$ for $n\ge1$ with $d_1\ne d_2$ leads to  $B^{d_1,d_1}_{n_1}\ne 0$ and $B^{d_2,d_2}_{n_2}\ne 0$ for some $n_1,n_2\ge n+1$.
\end{remark}

Corollary \ref{cor4} is a direct result from Proposition \ref{prop2} and Corollary \ref{cor3} (2). It shows the necessary blocks for the block system of a non-cosemisimple Hopf algebra with no nontrivial skew-primitives.

\begin{corollary}\label{cor4}
If $H$ is a non-cosemisimple Hopf algebra with no nontrivial skew-primitives, then there exist $\tau,\mu \in \hat {H}$ with $d_\tau=d_\mu>1$, such that $B^{d_\tau,1}_1$, $B^{1, d_\tau}_1$, $B^{d_\tau,d_\mu}_k$ and $B^{1,1}_m\ne 0$ for some $k,m>1$.
\end{corollary}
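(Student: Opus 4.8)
The plan is to read off each of the four nonvanishing claims directly from Proposition \ref{prop2}, using the translation principle stated just before the corollary: any nonzero $Q^{\tau,\mu}_n$ forces its host block $B^{d_\tau,d_\mu}_n$ to be nonzero. The only bookkeeping I need to keep straight is that a group-like element contributes a $1$-dimensional simple subcoalgebra, so indices attached to elements of $G(H)$ carry dimension $1$.

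First I would fix an arbitrary $g\in G(H)$; since $1\in G(H)$, such a $g$ always exists. Applying Proposition \ref{prop2}(2) to this $g$ yields $\tau,\mu\in\hat{H}$ with $d_\tau=d_\mu>1$ such that $Q^{\tau,g}_1\ne 0$ and $Q^{\tau,\mu}_k\ne 0$ for some $k>1$. Because $g$ is group-like, $d_g=1$, so $Q^{\tau,g}_1$ sits inside $B^{d_\tau,1}_1$, giving $B^{d_\tau,1}_1\ne 0$; likewise $Q^{\tau,\mu}_k$ sits inside $B^{d_\tau,d_\mu}_k$, giving $B^{d_\tau,d_\mu}_k\ne 0$ with $d_\tau=d_\mu>1$. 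Notice that this single pair $\tau,\mu$ supplied by Proposition \ref{prop2}(2) simultaneously witnesses two of the four required blocks.

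Next I would obtain the third block from the symmetry already recorded in Corollary \ref{cor3}(2): applying it to $B^{d_\tau,1}_1\ne 0$ (with $n=1\ge 1$) immediately gives $B^{1,d_\tau}_1\ne 0$. Finally, for the purely group-like block, I would apply Proposition \ref{prop2}(1) to the same $g$, producing $h\in G(H)$ with $Q^{g,h}_m\ne 0$ for some $m>1$; since both $g$ and $h$ are group-like we have $d_g=d_h=1$, so this component lies in $B^{1,1}_m$ and hence $B^{1,1}_m\ne 0$ for some $m>1$.

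I do not expect a genuine obstacle here: the statement is essentially a repackaging of Proposition \ref{prop2} in the language of blocks, and all the work has already been done in proving that proposition together with the symmetry in Corollary \ref{cor3}(2). The one point deserving a sentence of care is the index matching—verifying that group-like indices always carry dimension $1$ and that the $\tau,\mu$ extracted from Proposition \ref{prop2}(2) are the same pair referenced in the corollary's conclusion—so that all four assertions hold for a common choice of $\tau$ and $\mu$.
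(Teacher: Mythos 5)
Your proposal is correct and follows exactly the route the paper intends: the paper dispenses with a written proof by noting that Corollary \ref{cor4} is a direct consequence of Proposition \ref{prop2} together with the translation principle (every nonzero $Q^{\tau,\mu}_n$ yields a nonzero block $B^{d_\tau,d_\mu}_n$) and the symmetry in Corollary \ref{cor3}(2), which is precisely what you spell out. Your extra care about group-like indices carrying dimension $1$ and about all four blocks being witnessed by a common pair $\tau,\mu$ is accurate and fills in the bookkeeping the paper leaves implicit.
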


\begin{remark}\label{rmk1}
A pointed Hopf subalgebras is nontrivial if it is not a group algebra. By \cite[Proposition 1.8]{andruskiewitsch2001counting}, a Hopf algebra has no nontrivial skew-primitives if and only if it has no nontrivial pointed Hopf subalgebras. Following \cite[Lemma 2.8]{beattie2013classifying}, if $(|G(H)|, \dim H / |G(H)|)=1$, then $H$ has no nontrivial skew-primitives. An example is that $\dim H$ is free of squares.
\end{remark}

For a pointed Hopf algebra, all the blocks are of the form $B^{1,1}_n$ with $n\ge0$. For a non-cosemisimple Hopf algebra with no nontrivial nontrivial skew-primitives, $B^{1,1}_n$'s also play an important role.

\begin{proposition}\label{prop3}
For blocks of $H$, let $m= \max \{m'\ |\ B^{1,1}_{m'}\ne 0\}$ and  $l= \min \{m'\ |\ B^{1,1}_{m'}\ne 0\}$, then
\begin{enumerate}
\item $\dim B^{1,1}_m = |G(H)|$;
\item if $l< m$, and $H$ is non-cosemisimple and has no nontrivial skew-primitives, then there exist $d_1,d_2,d_3,d_4>1$ and $l'>l>1$, such that $B^{d_1,1}_{l'}\ne 0$, $B^{1,d_2}_{l'}\ne 0$, $B^{d_1,d_3}_{l'-1}\ne0$ and $B^{d_4,d_2}_{l'-1}\ne0$.
\end{enumerate}
\end{proposition}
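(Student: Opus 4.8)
The first assertion is immediate. The integer $m$ coincides with $m^{H}$ of \eqref{B11mmax}, so $\dim B^{1,1}_{m}=|G(H)|$ is exactly the content of Proposition \ref{propdimB11mH}. For the second assertion I first fix the reading of $l$: since $B^{1,1}_{0}=\Bbbk G(H)$ is always nonzero, $l$ must denote the least \emph{positive} level with $B^{1,1}_{l}\neq0$ (this is what the conclusion $l>1$ demands). A nonzero $Q^{g,h}_{1}$ is precisely a nontrivial $(g,h)$-skew-primitive, so the hypothesis forces $B^{1,1}_{1}=0$ and hence $l\geq 2>1$. The same observation, that $B^{1,1}_{1}=0$, will be the recurring source of the strict inequalities $d_{i}>1$.

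The plan is to reduce the four required non-vanishings to a single one: it suffices to produce a level $l'>l$ and an integer $d>1$ with $B^{d,1}_{l'}\neq0$. Indeed, Corollary \ref{cor3}(2) then gives $B^{1,d}_{l'}\neq0$, and taking $d_{1}=d_{2}=d$ settles the two level-$l'$ conditions. Since $l'>l\geq2$, Corollary \ref{cor3}(1) applies to $B^{d_{1},1}_{l'}$: with the index $i=l'-1$ it yields $B^{d_{1},b}_{l'-1}\neq0$ together with $B^{b,1}_{1}\neq0$, and $B^{1,1}_{1}=0$ forces $b>1$; set $d_{3}=b$. Symmetrically, Corollary \ref{cor3}(1) applied to $B^{1,d_{2}}_{l'}$ with $i=1$ gives $B^{1,b'}_{1}\neq0$ (so $b'>1$) and $B^{b',d_{2}}_{l'-1}\neq0$; set $d_{4}=b'$. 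This produces the two level-$(l'-1)$ conditions with $d_{3},d_{4}>1$, as needed.

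To find $l'>l$ and $d>1$ with $B^{d,1}_{l'}\neq0$, I would apply Corollary \ref{cor3}(1) to the maximal block $B^{1,1}_{m}$ (legitimate since $m>1$). At the index $i=1$ this produces $B^{1,b_{1}}_{1}\neq0$ and $B^{b_{1},1}_{m-1}\neq0$, where $b_{1}>1$ by the skew-primitive obstruction. If $m\geq l+2$ then $m-1>l$, so $l'=m-1$ and $d=b_{1}$ complete the proof. More generally, the argument goes through whenever some positive level $N\geq l+2$ carries $B^{1,1}_{N}\neq0$.

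The remaining case $m=l+1$ is, I expect, the genuine obstacle. Now the decomposition of $B^{1,1}_{l+1}$ reaches only the levels $1,\dots,l$, so it cannot by itself deliver a $1$-index block above level $l$; it does give $B^{d,1}_{l}\neq0$ with $d>1$ (take $i=1$). The natural move is to raise the level by Corollary \ref{cor3}(3) or Proposition \ref{prop1}(3) applied to $B^{d,1}_{l}$, obtaining $B^{d,e}_{n_{2}}\neq0$ with $n_{2}>l$; if $e=1$ we are done. The difficulty is that the ascent keeps the first index frozen at $d$ while the second index need not return to $1$, and moreover $\dim Q^{1,h}_{l+1}=1$ by part (1), so $B^{1,1}_{l+1}$ is terminal for Proposition \ref{prop1}(3) and climbing toward the grouplikes need not recover a $1$-index block above level $l$. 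Closing this case therefore seems to require something finer than the bare ``climbing'' rules: either tracking the explicit simple subcoalgebra produced in \cite[Lemma 3.8]{fukuda2008structure} so that a grouplike second index reappears at a level $l'>l$, or a finiteness argument at the top level $N$ of the coradical filtration, using Remark \ref{rmk2} to control the terminal diagonal blocks and force the $1$-index blocks to reach some $l'>l$. I would expect the main labour of the proof to sit precisely in this $m=l+1$ analysis.
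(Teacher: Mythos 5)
Your treatment of part (1) is fine (it is literally Proposition \ref{propdimB11mH} plus the trivial cosemisimple case), and your reduction of part (2) to producing a single nonzero block $B^{d,1}_{l'}$ with $d>1$ and $l'>l$ is exactly the paper's: the derivation of $d_3,d_4>1$ from Corollary \ref{cor3}(1) together with $B^{1,1}_1=0$ is correct. But the gap you flag is genuine, and the missing tool is not a refinement of the climbing rules; it is the integral argument of Lemma \ref{lmaintegral}. The paper never descends from $B^{1,1}_m$ at all --- it ascends from $B^{1,1}_l$. Take a tree-structure basis element $z\in W\cap B^{1,1}_l$, normalized (by multiplying by a grouplike, using Corollary \ref{cor1}) so that $\rho_L(z)=1\otimes z$. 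If $z$ lay in no $\overline{\mathcal{R}}(w)$ from (\ref{eqoverlineLwRw}), Lemma \ref{lmaintegral} would make $z^*$ a left integral of $H^*$; but the proof of part (1) already exhibits a left integral $u^*$ with $u\in W\cap B^{1,1}_m$, and since $l<m$ the functionals $z^*$ and $u^*$ are linearly independent members of the dual basis, contradicting the one-dimensionality of the space of left integrals. Hence $z\in\overline{\mathcal{R}}(w)$ for some $w\in W$, and the bicomodule bookkeeping in the proof of \cite[Lemma 3.2]{fukuda2008structure} forces $w\in B^{d,1}_t$ for some $d$ and some $t\ge l+1>l$.

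If $d>1$ you are done: set $l'=t$, $d_1=d_2=d$, and feed this into your reduction. If $d=1$, the paper uses the minimality of $l$ (again via the proof of Fukuda's Lemma 3.2) to get $t\ge 2l$, then decomposes $B^{1,1}_t$ by Corollary \ref{cor3}(1) to obtain $B^{d',1}_1\ne 0$ (whence $d'>1$) and $B^{1,d'}_{t-1}\ne 0$ with $t-1\ge 2l-1>l$, and sets $l'=t-1$. Note that in your problematic case $m=l+1$ the branch $d=1$ is actually vacuous, since it would give $B^{1,1}_t\ne 0$ with $t\ge 2l>l+1=m$, contradicting the maximality of $m$; but the integral step is still indispensable to produce $w$ in the first place. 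Neither of the two repairs you sketch (chasing the simple subcoalgebra of \cite[Lemma 3.8]{fukuda2008structure}, or a finiteness argument at the top of the coradical filtration) is what closes the case, and as you observe yourself, neither obviously works.
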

\begin{proof}
(1) If $H$ is cosemisimple, then $m=0$ and $B^{1,1}_0=\Bbbk G(H)$.

If $H$ is non-cosemisimple, then $m\ge 1$ by Corollary \ref{cor4}.

We built a basis for $H$ at first.

For any $\sigma \in \hat{H}$, let $\{e^\sigma_{i_\sigma,j_\sigma}\}_{i_\sigma,j_\sigma\in\{1\,\cdots,d_\sigma\}}$ be a standard basis of $D_\sigma$.

Recall that
$$
H=\bigoplus_{n'\ge0,d_1,d_2\ge1} B^{d_1,d_2}_{n'}=\bigoplus_{d\ge1}H_{0,d} \ \oplus \bigoplus_{n>0,d_1,d_2\ge1}\Big( \bigoplus_{\tau,\mu \in \hat{H}, d_\tau=d_1, d_\mu=d_2}Q^{\tau,\mu}_n \Big),
$$
where $Q^{\tau,\mu}_n = \bigoplus_k V^{\tau,\mu}_{n,k}$ with $V^{\tau,\mu}_{n,k} \simeq V_\tau\otimes V_\mu^*$ as simple $H_0$-bicomudules for some finite index $k\in \{1,\cdots,k(n,\tau,\mu)\}$. Here $k(n,\tau,\mu)$ denotes the number of simple $H_0$-bicomudules of the form $V_\tau\otimes V_\mu^*$ in $Q^{\tau,\mu}_n$.

We have that $D_\tau$ is the simple subcoalgebra of $H$ corresponding to $V_\tau$ with $\dim D_\tau = {d_\tau}\!\!^2$. Through the isomorphism form the unique simple left $D_\tau$-comodule to $V_\tau$, $\{e^\tau_{i_\tau,1}\}_{i_\tau\in\{1\,\cdots,d_\tau\}}$ leads to a basis of $V_\tau$. Similarly, $\{e^\mu_{1,j_\mu}\}_{j_\mu\in\{1\,\cdots,d_\mu\}}$ leads to a basis of $V_\mu^*$. Then $\{e^\tau_{i_\tau,1}\otimes e^\mu_{1,j_\mu}\}_{i_\tau\in\{1\,\cdots,d_\tau\},{j_\mu\in\{1\,\cdots,d_\mu\}}}$ leads to a basis of $V_\tau\otimes V_\mu^*$, and through the isomorphism from $V_\tau\otimes V_\mu^*$ to $V^{\tau,\mu}_{n,k}$, we have a basis of $V^{\tau,\mu}_{n,k}$, denoted by $W^{\tau,\mu}_{n,k}$.

Set
$$
W= \Big(\bigcup_{\sigma \in \hat{H},i_\sigma,j_\sigma\in\{1\,\cdots,d_\sigma\}} \{e^\sigma_{i_\sigma,j_\sigma}\}\Big) \ \cup \  \Big(\bigcup_{\tau,\mu\in \hat{H},n>0,k\in \{1,\cdots,k(n,\tau,\mu)\}} W^{\tau,\mu}_{n,k}\Big).
$$
Then $W$ is a basis of $H$.

Then we show that there is a basis element in $B^{1,1}_m$ such that its dual is the left integral of $H^*$.

Following notations at the beginning of this section, for any $w'\in W-H_0$, we have
$$
\Delta(w')= \rho_L(w')+\rho_R(w') + \sum_{x,y\in W} k_{x,y} x\otimes y,\ \text{for $k_{x,y}\in \Bbbk$}.
$$
Set
$$
\begin{array}{l}
L(w)=\left\{
\begin{array}{ll}
\{x\ |\ k_{x,y}\ne 0,y\in W\},& w\in W-H_0,\\
\varnothing, & w\in W\cap H_0,\ \text{and}\
\end{array}\right. \\
R(w)=\left\{
\begin{array}{ll}
\{y\ |\ k_{x,y}\ne 0,x\in W\},& w\in W-H_0,\\
\varnothing, & w\in W\cap H_0.
\end{array}\right.
\end{array}
$$

There exists $u\in W\cap B^{1,1}_m$, such that $\rho_L(u)+\rho_R(u)= 1\otimes u+ u\otimes g$ for some $g\in G(H)$.

We claim that $u\notin L(w)\cup R(w)$ for all $w\in W$. If $u\in R(w)$, for some $w\in W$, then by the proof of \cite[lemma 3.2]{fukuda2008structure}, $w\in B^{d,1}_r$, for $r>m$. By Corollary \ref{cor3} and Remark \ref{rmk2}, $B^{d,1}_r\ne0$ for $r>m$ leads to $B^{1,1}_{r'}\ne 0$ for $r'>m$, which is a contradiction with the maximality of $m$. Similar for $u\notin L(w)$.

Let $\{w^*\ |\ w\in W\}$ be the dual basis of $W$. Consider the convolution product $w^**u^*$. For any $v\in W$, by $u\notin R(v)$ and the definition of $W$, we have,
$$
w^**u^*(v)=\left\{
\begin{array}{ll}
1& w=1,v=u,\\
0& \text{otherwise}
\end{array}\right..
$$
Hence $w^**u^*=w^*(1)u^*$ for $w\in W$. Then for any $f\in H^*$, $f*u^*=f(1)u^*$, which makes $u^*$ a left integral of $H^*$.

Finally, we show that $\{hu\}_{h\in G(H)}$ is a basis of $B^{1,1}_m$.

See that $W\cap B^{1,1}_m$ is a basis of $B^{1,1}_m$. For $u'\in W\cap B^{1,1}_m$, assume that $\rho_L(u')+\rho_R(u')= g_1\otimes u'+ u'\otimes g_2$, $g_1,g_2\in G(H)$. Then there exists $k\in \Bbbk$ such that $kg_1^{-1}u' \in W$ and
$$
\rho_L(kg_1^{-1}u')+\rho_R(kg_1^{-1}u')= 1\otimes kg_1^{-1}u'+ kg_1^{-1}u'\otimes g_1^{-1}g_2.
$$
Since the space of left integral of $H^*$ is one-dimensional, we must have $kg_1^{-1}u'\in \Bbbk u$ and $g_1^{-1}g_2=g$. Then $u'=k'g_1u$ for some $k'\in\Bbbk$. Since $\rho_L+ \rho_R(\{hu\}_{h\in G(H)})$ is a linearly independent set in $H\otimes H$, $\{hu\}_{h\in G(H)}$ is also linearly independent. Then $\{hu\}_{h\in G(H)}$ is a basis of $B^{1,1}_m$ and $\dim B^{1,1}_m = |G(H)|$.

(2) Following notations in (1). Since $H$ has no nontrivial skew-primitives, we have $B^{1,1}_1 =0$ and $l>1$. By Corollary \ref{cor3} (1), the existence of $d_3,d_4$ with $B^{d_1,d_3}_{l'-1}\ne0$ and $B^{d_4,d_2}_{l'-1}\ne0$ is ensured by the existence of $d_1,d_2$.

For any $z\in W\cap B^{1,1}_l$, there exists $w\in W$ such that $z\in R(w)$ since $l<m$. Assume that $w\in B^{d,1}_t$.

If $d>1$, set $d_1=d_2=d$ and $l'=t$. By Corollary \ref{cor3} (2), $B^{1,d_2}_{l'}\ne0$. Then such $d_1,d_2$ satisfy the requirements.

Otherwise $d=1$. By the proof of \cite[lemma 3.2]{fukuda2008structure} and the minimality of $l$, we have $t\ge 2l$. For $B^{1,1}_t\ne0$, there exist $B^{1,d'}_{t-1},B^{d',1}_1 \ne 0$. Since $H$ has no nontrivial skew-primitives, we have $d'>1$. With $t-1\ge 2l-1 >l$, $B^{1,d'}_{t-1}$ and $B^{d',1}_{t-1}$ are not zero. Set $d_1=d_2=d'$ and $l'=t-1$. Then (2) is proved.
\end{proof}

\section{Applications}

By Corollary \ref{cor3} and Corollary \ref{cor4}, if $H$ is a non-cosemisimple Hopf algebra with no nontrivial skew-primitives, then its block system has the form as
$$
B^{1,1}_0 \oplus B^{d,d}_0\oplus B^{d,1}_1 \oplus B^{1,d}_1\oplus \cdots\oplus B^{1,1}_m \oplus B^{d,d}_{k},
$$
for some $d> 1$, $k= \max \{k'\ |\ B^{d,d}_{k'}\ne 0\}$ and $m= \max \{m'\ |\ B^{1,1}_{m'}\ne 0\}$. By Remark \ref{rmk2}, such a $k$ exits and if $B^{d,d'}_{n}\ne 0$ for some $d'\ge 1$, then $n\le k$. We use a diagram (Figure 1) to show the structure of the block system.

\unitlength=1cm
$$\begin{picture}(8.5,4.5)(0.1,0)
\put(0,0){\framebox(2.5,0.7)[]{$\mathbf{B^{1,1}_0}$}}
\put(3,0){\framebox(2.5,0.7)[]{$\mathbf{B^{d,d}_0}$}}
\put(3,1){\framebox(2.5,0.7)[]{$\mathbf{B^{d,1}_1}$}}
\put(6,1){\framebox(2.5,0.7)[]{$\mathbf{B^{1,d}_1}$}}
\put(3,3){\framebox(2.5,0.7)[]{$\mathbf{B^{1,1}_m}$}}
\put(6,3.5){\framebox(2.5,0.7)[]{$\mathbf{B^{d,d}_k}$}}
\put(4.3,2){\circle*{0.1}}
\put(4.3,2.3){\circle*{0.1}}
\put(4.3,2.6){\circle*{0.1}}
\put(7.3,2.3){\circle*{0.1}}
\put(7.3,2.6){\circle*{0.1}}
\put(7.3,2.9){\circle*{0.1}}
\put(6.8,0.4){\circle*{0.1}}
\put(7.3,0.4){\circle*{0.1}}
\put(7.8,0.4){\circle*{0.1}}
\end{picture}$$
\begin{center}
(Figure 1)
\end{center}
\vskip5mm

These six blocks are necessary for every non-cosemisimple Hopf algeabra with no nontrivial skew-primitives. If the block system of a coalgebra does not contain these six blocks above, then this coalgebra does not admits the structure of a non-cosemisimple Hopf algeabra with no nontrivial skew-primitives.

We find a lower bound for the dimension of a non-cosemisimple Hopf algebra with no nontrivial skew-primitives, which is a generation of \cite[Proposition 3.2]{beattie2013techniques}. For $n_1,n_2\in \mathbb{N}$, we denote by $lcm(n_1,n_2)$ the least common multiple of $n_1$ and $n_2$.

\begin{theorem}\label{thm1}
If $H$ is a non-cosemisimple Hopf algebra with no nontrivial skew-primitives and $|G(H)|=r$,  then
$$
\dim H \ge \min\{(2d+2)r+2lcm(d^2,r)\ |\ d>1\}.
$$
\end{theorem}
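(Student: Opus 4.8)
The plan is to bound $\dim H$ from below by the sum of the dimensions of the six blocks that Corollary \ref{cor4} together with the structural discussion preceding this theorem forces to appear in the block system of $H$. Since $H$ is non-cosemisimple with no nontrivial skew-primitives, there is some $d>1$ for which the six summands
$$
B^{1,1}_0,\quad B^{d,d}_0,\quad B^{d,1}_1,\quad B^{1,d}_1,\quad B^{1,1}_m,\quad B^{d,d}_k
$$
are all nonzero, where $m=\max\{m'\mid B^{1,1}_{m'}\ne 0\}>1$ and $k=\max\{k'\mid B^{d,d}_{k'}\ne0\}>1$. As $H$ decomposes as the direct sum of its blocks, and these six blocks are pairwise distinct (here one uses $d>1$ to separate $B^{d,1}_1$ from $B^{1,d}_1$, and $m,k\ge 1$ to separate the degree-$0$ blocks from their higher-degree counterparts), their dimensions add up to at most $\dim H$.

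The core of the argument is then a dimension estimate for each of the six summands, obtained by combining two kinds of divisibility. First, by Corollary \ref{cor2} every nonzero block has dimension divisible by $|G(H)|=r$; in particular $\dim B^{1,1}_0=r$ and, by Proposition \ref{prop3}(1), $\dim B^{1,1}_m=r$. Second, by Proposition \ref{propBd1nB1dndvdbydGH} the two mixed blocks satisfy $dr\mid\dim B^{d,1}_1$ and $dr\mid\dim B^{1,d}_1$, so being nonzero each contributes at least $dr$. For the two square blocks I would observe that each carries a second divisibility by $d^2$: the block $B^{d,d}_0=C_{0,d}$ is a direct sum of simple subcoalgebras $D_\tau$ of dimension $d_\tau^2=d^2$, while $B^{d,d}_k$ is a direct sum of the comodules $Q^{\tau,\mu}_k$, each of which is in turn a direct sum of copies of $V^*_\tau\otimes V_\mu$ of dimension $d_\tau d_\mu=d^2$. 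Hence both $\dim B^{d,d}_0$ and $\dim B^{d,d}_k$ are nonzero common multiples of $d^2$ and $r$, so each is at least $lcm(d^2,r)$.

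Adding these six lower bounds gives
$$
\dim H\ \ge\ r+lcm(d^2,r)+dr+dr+r+lcm(d^2,r)\ =\ (2d+2)r+2\,lcm(d^2,r)
$$
for the particular $d$ realized in $H$. Since this value is one of the terms over which the minimum on the right-hand side is taken, the stated bound follows by passing to $\min\{(2d+2)r+2\,lcm(d^2,r)\mid d>1\}$. The only genuinely delicate points, as opposed to routine arithmetic, are the two places where extra structure is needed: verifying that the six blocks really are distinct summands so that their dimensions may be added, and justifying the divisibility of $\dim B^{d,d}_0$ and $\dim B^{d,d}_k$ by $d^2$, which is what upgrades the naive bound $r$ coming from Corollary \ref{cor2} to $lcm(d^2,r)$ and makes the estimate the sharp one appearing in the statement.
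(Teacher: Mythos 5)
Your proposal is correct and follows essentially the same route as the paper: identify the six necessary blocks from Corollary \ref{cor4}, bound each one below using $\dim B^{1,1}_0=\dim B^{1,1}_m=r$, the $dr$-divisibility of the mixed blocks, and the $lcm(d^2,r)$-divisibility of the two square blocks, then sum and minimize over $d>1$. Your added care about the pairwise distinctness of the six summands and the explicit justification that $d^2$ divides $\dim B^{d,d}_k$ (via the decomposition into copies of $V^*_\tau\otimes V_\mu$) only makes explicit what the paper leaves implicit.
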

\begin{proof}
First we take the lower bound of the possible dimensions of each necessary block. Then add them together (see that these sums are related to $d$). Finally pick the minimal one for all $d>1$ as the lower bound of $\dim H$.

We have $\dim B^{1,1}_0 = |G(H)|=r$. Since $n$ divides the dimension of every block and $d^2$ divides $\dim H_{0,d}$, we have $\dim B^{d,d}_0=\dim H_{0,d}$ is a multiple of $lcm(d^2,r)$. Similarly $\dim B^{d,d}_k$ is a multiple of $lcm(d^2,r)$. By Proposition \ref{prop3} (1), $\dim B^{1,1}_m = |G(H)|=r$.

The dimension of a simple $H_0$-bicomodule in $B^{d,1}_1$ is $d$, and with left multiplications of group like elements in $G(H)$, we have another $|G(h)|-1$ simple $H_0$-bicomodules of different isomorphism types in $B^{d,1}_1$. Hence $\dim B^{d,1}_1$ is a multiple of $d|G(H)|=dr$. By Corollary \ref{cor1}, $\dim B^{1,d}_1 =\dim B^{d,1}_1$ is also a multiple of $dr$.
\end{proof}

For further discussions, we make some dotations at first.
\begin{definition}\label{def2}
For $r,d_1,d_2\ge1$,
\begin{enumerate}
\item set
$$
f(r,d_1,d_2)=\left\{
\begin{array}{ll}
r& d_1=1,d_2=1,\\
2d_2r& d_1=1,d_2\ne1,\\
2d_1r& d_1\ne1,d_2=1,\\
lcm(d_1d_2,n)& d_1\ne1,d_2\ne1;
\end{array}\right.
$$
\item set $L_{r,d_1,d_2}=\Bbbk^{\oplus f(r,d_1,d_2)}$ as a $\Bbbk$-space of dimension $f(r,d_1,d_2)$ and call it \textit{a basic block};
\item for $d>1$, set $L(r,d)=(L_{r,1,1})^{\oplus 2}\oplus (L_{r,d,d})^{\oplus 2}\oplus L_{r,d,1}$ and call it \textit{a minimal form}.
\end{enumerate}
\end{definition}

For $|G(H)|=r$ and $d,d_1,d_2>1$, $n\ge0$, we have
\begin{enumerate}
\item $\dim L(r,d) = (2d+2)r+2lcm(d^2,r)$; 
\item $\dim B^{1,1}_n$ is a multiple of $r=\dim L_{r,1,1}$;
\item $\dim B^{d,1}_n\oplus B^{1,d}_n$ is a multiple of $2dr=\dim L_{r,d,1}=\dim L_{r,1,d}$;
\item $\dim B^{d_1,d_2}_n$ is a multiple of $lcm(d_1d_2,r)=\dim L_{r,d_1,d_2}$.
\end{enumerate}
Then as $\Bbbk$-spaces, $B^{1,1}_n$, $B^{d,1}_n\oplus B^{1,d}_n$ and $B^{d_1,d_2}_n$ are isomorphic to a multiple of $L_{r,1,1}$, $L_{r,d,1}$ and $L_{r,d_1,d_2}$, respectively.

Assume that $H$ is non-cosemisimple and has no nontrivial skew-primitives. Then as $\Bbbk$-spaces, the block system of $H$ is isomorphic to a minimal form $L(|G(H)|,d)$ with more (or no) basic blocks being added. Of course, the adding of basic blocks into the minimal form must coincide with `rules' of block system.

Compare Figure 1 and Figure 2 to see the corresponding relations between necessary blocks and basic blocks.

\unitlength=1cm
$$
\begin{picture}(8.5,4.5)(0.1,0)
\put(0,0){\framebox(2.5,0.7)[]{$\mathbf{L_{r,1,1}}$}}
\put(3,0){\framebox(2.5,0.7)[]{$\mathbf{L_{r,d,d}}$}}
\put(3,1){\framebox(5.5,0.7)[]{$\mathbf{L_{r,d,1}}$}}
\put(3,3){\framebox(2.5,0.7)[]{$\mathbf{L_{r,1,1}}$}}
\put(6,3.5){\framebox(2.5,0.7)[]{$\mathbf{L_{r,d,d}}$}}
\put(4.3,2){\circle*{0.1}}
\put(4.3,2.3){\circle*{0.1}}
\put(4.3,2.6){\circle*{0.1}}
\put(7.3,2.3){\circle*{0.1}}
\put(7.3,2.6){\circle*{0.1}}
\put(7.3,2.9){\circle*{0.1}}
\put(6.8,0.4){\circle*{0.1}}
\put(7.3,0.4){\circle*{0.1}}
\put(7.8,0.4){\circle*{0.1}}
\end{picture}
$$
\begin{center}
(Figure 2)
\end{center}
\vskip5mm

With more condition on $|G(H)|$, we have the next result following Proposition \ref{prop3} and Theorem \ref{thm1}. See that $|G(H)|$ divides $\dim H$.

\begin{corollary}\label{cor5}
If $H$ is a non-cosemisimple Hopf algebra with no nontrivial skew-primitives and $|G(H)|=p$ with $p$ being a prime number, set $\dim H=tp$ with $t\ge 1$, then we have
\begin{enumerate}
\item if $p=2$, then $t\ne 1,2,\cdots,9,11,13,15$;
\item if $p=3$, then $t\ne 1,2,\cdots,13,15,16,19$;
\item If $p>3$, then $t\ne 1,2,\cdots,13,15,16,17,19\ (\text{if $p\ne5$}),20,21\ (\text{if $p\ne7$})$.
\end{enumerate}
\end{corollary}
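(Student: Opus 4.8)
The plan is to convert the structure theory recorded after Theorem \ref{thm1} into an arithmetic (``change-making'') problem. Writing $\dim H=tp$, I measure every block by its $t$-contribution $\dim/p$, so that $t$ is a nonnegative integer combination of the numbers $\dim L_{p,d_1,d_2}/p$ attached to the basic blocks of Definition \ref{def2}. By that discussion the block system of $H$ is a minimal form $L(p,d)$, for the distinguished $d>1$ furnished by Corollary \ref{cor4}, together with further basic blocks adjoined compatibly with Corollaries \ref{cor1}--\ref{cor4} and Proposition \ref{prop3}. The first step records $t_{\min}(d)=\dim L(p,d)/p=2+2d+2\,lcm(d^2,p)/p$: this equals $10$ for $(p,d)=(2,2)$, equals $14$ for $d=2$ with any $p\ge 3$ as well as for $(p,d)=(3,3)$, and is at least $22$ for every other choice (the bound $22$ being attained only at $(p,d)=(5,5)$). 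Hence whenever $t<22$ the distinguished dimension must be $d=2$, or $3$ when $p=3$, so $t_{\min}=10$ for $p=2$ and $t_{\min}=14$ for $p\ge 3$; Theorem \ref{thm1} thereby discards $t\le 9$ for $p=2$ and $t\le 13$ for $p\ge 3$.

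The second, and crucial, step bounds the \emph{increments}: the $t$-contributions of the cheapest rule-respecting blocks that may be attached to the $d=2$ minimal form (of $t$-value $t_{\min}$). Extra copies of $B^{2,2}_0$, of $B^{2,2}_k$, or of the pair $B^{2,1}_1\oplus B^{1,2}_1$ create no new level and so cost only $lcm(4,p)/p$ or $4$, with no cascade. An isolated diagonal coradical block $B^{d',d'}_0$ with $d'\ge 3$ costs $lcm({d'}^2,p)/p$; the cheapest are $d'=3$ (cost $9$ if $p\ne 3$, cost $3$ if $p=3$) and, for $p\ge 5$, $d'=p$ (cost $p$). The one remaining possibility is to enlarge the $B^{1,1}$-part beyond the two forced levels $0$ and $m$; as $\dim B^{1,1}_0=\dim B^{1,1}_m=p$ by Proposition \ref{prop3}(1), any enlargement creates a strictly intermediate $B^{1,1}$-level, i.e.\ $l<m$. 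Proposition \ref{prop3}(2) then forces a block $B^{d_1,1}_{l'}$ with $l<l'$, and Remark \ref{rmk2} forces a further $B^{1,1}$-level above $l'$, so the top must rise by two levels; carrying the resulting data --- a pair $B^{2,1}\oplus B^{1,2}$ at one new level, then $B^{2,2}$ and $B^{1,1}$ at the next, all supported through Corollary \ref{cor3}(1),(3) --- shows its minimal cost is $7$ for $p=2$ and $9$ for $p\ge 3$. Thus in the range below $8$ the only increments are $2$, $4$ (and $7$) for $p=2$; $3$ and $4$ for $p=3$; and $4$ together with the single value $p$ for $p\ge 5$.

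The third step is the representability count: $t$ is forbidden exactly when $t-t_{\min}$ is not a nonnegative combination of increments. For $p=2$ the even increments give every even value while the smallest odd increment is $7$, so above $10$ the only gaps are $11,13,15$. For $p=3$ the combinations of $3$ and $4$ miss precisely $1,2,5$, yielding the gaps $15,16,19$ above $14$. For $p\ge 5$ the combinations of $4$ and $9$ realize none of $1,2,3,5,6,7$, and the only extra value reachable in that range is $p$ itself, so the gaps above $14$ are $15,16,17,19,20,21$ --- except that the rule-respecting block $B^{p,p}_0$ (increment $p$) makes $19$ attainable when $p=5$ and $21$ attainable when $p=7$, so these cannot be excluded and are correctly omitted. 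Collecting the three cases gives the stated lists.

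The main obstacle is the cost estimate of the second step for enlarging the $B^{1,1}$-part. One must verify without gaps that every rule-respecting creation of an extra $B^{1,1}$-level --- or insertion of a $B^{2,1}$-type pair at a genuinely new level --- is driven by Proposition \ref{prop3}(2), Corollary \ref{cor3}(1),(3) and Remark \ref{rmk2} to carry a full two-step tower rather than a single cheap block, so that its cost is at least $7$ (resp.\ $9$), and that the intermediate blocks it produces cannot all be absorbed into blocks already present in the minimal form. Once these lower bounds are established the remaining arithmetic is routine, and the exceptional primes $5$ and $7$ arise automatically from the cheap diagonal coradical blocks of cost $p$.
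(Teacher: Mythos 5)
Your proposal follows essentially the same route as the paper's proof: start from the minimal form $L(p,d)$ supplied by Theorem \ref{thm1}, note that any excess dimension must arrive in rule-respecting basic blocks, invoke Proposition \ref{prop3} to show that creating an extra $B^{1,1}$-level forces an expensive cascade of new blocks, and finish with arithmetic on the achievable increments. The paper only writes out case (2) and leaves the analogous counts implicit, so your uniform ``representability of increments'' bookkeeping is a more explicit rendering of the same argument, and both treatments hinge on the same delicate point that you correctly flag --- verifying that the blocks forced by Proposition \ref{prop3}(2) (and by off-diagonal blocks such as $B^{2,3}_n$ of small nominal cost) cannot be absorbed into the six necessary blocks.
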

\begin{proof}
We only prove (2) here, since the proof for (1),(2) and (3) are similar.

In (2), with $p=3$, the lower bound of $\dim H$ by Theorem \ref{thm1} is $14p$, which corresponds to two different minimal forms $L(p,2)$ and $L(p,3)$. Then $t\ne 1,2,\cdots,13$ are proved.

See that $L(p,4)=42p$. Hence, if $14p<\dim H< 42p$, then as $\Bbbk$-spaces, the block system of $H$ must be isomorphic to $L(p,2)$ or $L(p,3)$ with basic blocks added.

If $t=15,16,$ or $19$, it is not difficult to see that there are at least one basic block $L_{p,1,1}$ of $\dim p$ being added.

By Proposition \ref{prop3} (1), the block $B^{1,1}_m$ with $m= \max \{m'\ |\ B^{1,1}_{m'}\ne 0\}$ has a fixed dimension $p$. This means the new added basic block $L_{p,1,1}$ leads to a nonzero block $B^{1,1}_n$ of $H$ with $n<m$. Then for $l= \min \{m'\ |\ B^{1,1}_{m'}\ne 0\}$, $l<m$. By Proposition \ref{prop3} (2), there exists $B^{1,d}_{l'}\ne 0$, $B^{d,1}_{l'}\ne0$ and $B^{d,d}_{l'-1}\ne0$ for some $l'>l>1$ and $d>1$. Each of the three blocks is not one of the six necessary blocks of $H$. Then there are more basic blocks need to be added into $L(p,2)$ or $L(p,3)$.

For $L(p,2)$, $\dim H \ge \dim L(p,2) + \dim L(p,1,1)+ \dim L(p,2,1)+ \dim L(p,2,2)= 21p$;
for $L(p,3)$, $\dim H \ge \dim L(p,3) + \dim L(p,1,1)+ \dim L(p,3,1)+ \dim L(p,3,3)= 24p$.
Both contradict with $\dim H = tp$. So (2) is proved.
\end{proof}

Corollary \ref{cor5} provides a new angle to look at things we have already known. For example, the fact proved in \cite{DAIJIRO2011HOPF} that there is no non-cosemisimple Hopf algebra of dimension $ 30$ is a direct result from this corollary, Theorem \ref{thm1} and \cite[Corollary 2.2]{ng2005hopf}. If $H$ is a noncosemisimple Hopf algebra with $\dim H= 30$, by Theorem \ref{thm1}, $|G(H)|\ne 15,10,6$, and by Corollary \ref{cor5}, $|G(H)|\ne 5,3,2$. By \cite[Corollary 2.2]{ng2005hopf}, $H$ or $H*$ is not unimodular. Then $|G(H^*)|=30$ or $|G(H)|=30$, which contradicts with $H$ being non-coseimisimple.

\begin{remark}\label{rmk3}
Some of the results in Corollary \ref{cor5} have already been proved by different methods.

Under conditions of Corollary \ref{cor5}, let $q$ be a prime number with $p<q$. By \cite{ng2005hopf}, there is no non-cosemisimple Hopf algebra of dimension $2q$. By \cite{Andruskiewitsch1998Hopf}, a non-cosemisimple Hopf algebra of dimension $p^2$ is a Taft algebra, which is pointed. By \cite{ng2004hopf} and \cite{ng2008hopf}, Hopf algebras of dimension $pq$ with $p,q$ being odd primes and $p<q\le 4p+11$ are cosemisimple. Then all cases in (1)(2)(3) that $t$ does not equal a prime number are covered.

For $p=2$, $t\ne 4$ is proved in \cite{williams1988finite}, which shows that there is no non-cosemisimple non-pointed Hopf algebra of dimension dimension $8$; $t\ne 6$ is proved in \cite{natale2002hopf} that non-cosemisimple Hopf algebras of dimension $12$ always have nontrivial primitives; $t\ne 8$ is proved in \cite{garcia2010hopf}, in which Hopf algebras of dimension $16$ are classified completely; $t\ne 9$ is proved in \cite{hilgemann2009hopf}, which shows that there is no non-cosemisimple non-pointed Hopf algebra of dimension $2q^2$; and $t\ne15$ follows from \cite{DAIJIRO2011HOPF}, which completes the classification of Hopf algebras of dimension $30$.

For $p=3$, $t\ne 4,6$ follows from \cite{natale2002hopf} and \cite{hilgemann2009hopf}, and $t\ne 8,9,10$ is well known by \cite[Proposition 3.2]{beattie2013techniques}.
\end{remark}

Following Remark \ref{rmk1}, we list results in Corollary \ref{cor5} which have not been given by others as far as we know:
\begin{theorem}\label{thm2}
If $H$ is a non-cosemisimple Hopf algebra, then we have the following results.
\begin{enumerate}
\item For a prime number $p$,
    \begin{enumerate}
    \item if $\dim H= 12p$ with $p>3$, then $|G(H)|\ne p$;
    \item if $\dim H= 15p$ with $p>5$, then $|G(H)|\ne p$;
    \item if $\dim H= 16p$ with $p\ge 3$, then $|G(H)|\ne p$;
    \item if $\dim H= 20p$ with $p>5$, then $|G(H)|\ne p$;
    \item if $\dim H= 21p$ with $p=5$ or $p>7$, then $|G(H)|\ne p$.
    \end{enumerate}
\item Assuming that $H$ has no nontrivial skew-primitives,
    \begin{enumerate}
    \item if $\dim H= 36\ \text{or}\ 45$, then $|G(H)|\ne3$;
    \item if $\dim H=75\ \text{or}\ 100$, then  $|G(H)|\ne 5$.
    \end{enumerate}

\end{enumerate}
\end{theorem}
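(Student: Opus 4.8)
The plan is to deduce every assertion directly from Corollary \ref{cor5}, arguing by contradiction: in each line I assume that $|G(H)|$ equals the prime named in the conclusion, write $\dim H = tp$ with $t=\dim H/|G(H)|$ the stated cofactor, and then exhibit $t$ in the list of values that Corollary \ref{cor5} forbids for that prime. The only structural difference between parts (1) and (2) is how the hypothesis ``no nontrivial skew-primitives'' required by Corollary \ref{cor5} is secured.

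For part (1) the first step is to observe that the constraints placed on $p$ force $(p,t)=1$ in every case: $t=12$ with $p>3$ in (a), $t=15$ with $p>5$ in (b), $t=16$ with $p\ge 3$ odd in (c), $t=20$ with $p>5$ in (d), and $t=21$ with $p=5$ or $p>7$ in (e), so that $p$ never divides $t$. Hence Remark \ref{rmk1} applies and $H$ has no nontrivial skew-primitives; since $H$ is also non-cosemisimple, Corollary \ref{cor5} becomes available. It then remains to match each cofactor to the correct excluded list: (a) uses part (3) since $12\le 13$; (b) uses part (3), which forbids $15$; (c) uses part (2) when $p=3$ and part (3) when $p>3$, both forbidding $16$; (d) uses part (3), which forbids $20$; and (e) uses part (3), whose exclusion of $21$ is conditioned on $p\ne 7$, exactly the content of ``$p=5$ or $p>7$''. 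Each forbidden $t$ contradicts $\dim H=tp$, giving $|G(H)|\ne p$.

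For part (2) the cofactor is no longer coprime to the prime --- indeed $36=3\cdot 12$, $45=3\cdot 15$, $75=5\cdot 15$ and $100=5\cdot 20$ all have a common factor --- so Remark \ref{rmk1} is unavailable and the absence of nontrivial skew-primitives must instead be imposed as a hypothesis, which is precisely what part (2) does. Granting it, Corollary \ref{cor5} finishes the argument: with $p=3$ its part (2) forbids $t=12$ (dimension $36$) and $t=15$ (dimension $45$); with $p=5$ its part (3) forbids $t=15$ (dimension $75$) and $t=20$ (dimension $100$).

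I expect the main obstacle to be the bookkeeping around the prime-dependent exceptions in Corollary \ref{cor5} rather than any new idea. One must verify that $20$ is forbidden for every $p>5$, including the boundary prime $p=7$; that the conditional ``$p\ne 7$'' attached to $21$ is respected by the split ``$p=5$ or $p>7$'' in (e); and that the separate ``$p\ne 5$'' exception attached to the value $19$ in Corollary \ref{cor5} never touches the cofactors $12,15,16,20,21$ occurring here. Once these boundary primes are checked against the precise excluded lists, the theorem follows with no further computation.
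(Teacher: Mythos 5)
Your proposal is correct and is exactly the argument the paper intends: Theorem \ref{thm2} is stated as an immediate consequence of Corollary \ref{cor5}, with Remark \ref{rmk1} (the coprimality criterion $(|G(H)|,\dim H/|G(H)|)=1$) supplying the ``no nontrivial skew-primitives'' hypothesis in part (1) and that hypothesis being assumed outright in part (2). Your bookkeeping of the boundary primes and of the exceptions attached to $19$ and $21$ in Corollary \ref{cor5} is accurate, so nothing further is needed.
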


\begin{corollary}\label{corHopfalgofdim45}
Let $H$ be a Hopf algebra of dimension $45$. If $H$ is non-cosemisimple and non-pointed, then $G(H)=3$ and the maximal pointed Hopf subalgebra of $H$ is a Taft algebra of dimension $9$.
\end{corollary}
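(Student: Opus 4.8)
The plan is to first pin down $|G(H)|$ by elimination, and then recover the pointed part of $H$ from the classification of low-dimensional pointed Hopf algebras. Since $H$ is non-cosemisimple it is non-semisimple, so by the Nichols--Zoeller theorem $|G(H)|$ divides $\dim H=45$; thus $|G(H)|\in\{1,3,5,9,15,45\}$, and the value $45$ is discarded at once because it would make $\Bbbk G(H)=H$ cosemisimple. I would then argue by cases according to whether $H$ possesses a nontrivial skew-primitive.

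Suppose first that $H$ has a nontrivial skew-primitive $x$. This already forces $|G(H)|>1$, for if $|G(H)|=1$ then $x$ is primitive and a finite-dimensional Hopf algebra in characteristic $0$ has no nonzero primitives. Let $K$ be the Hopf subalgebra generated by $G(H)$ together with $x$; being generated by grouplikes and a skew-primitive it is pointed and nontrivial, with $G(K)=G(H)$. Hence $|G(H)|\mid \dim K\mid 45$ and $\dim K>|G(H)|$. For $|G(H)|\in\{9,15\}$ this forces $\dim K=45$, i.e. $H=K$ is pointed, a contradiction; for $|G(H)|=5$ the value $\dim K=45$ is excluded the same way while $\dim K=15$ would require a $3$-dimensional Nichols algebra over $C_5$, impossible since the finite-dimensional Nichols algebras over $C_5$ have dimension a power of $5$. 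The value $|G(H)|=3$ survives, so in this branch $|G(H)|=3$.

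Suppose instead that $H$ has no nontrivial skew-primitive. Then I would apply the quantitative results of the paper: by Theorem \ref{thm1}, $|G(H)|\in\{5,9,15\}$ would force $\dim H\ge 70,\,90,\,210$ respectively, all exceeding $45$, and for $|G(H)|=3$ Theorem \ref{thm2}(2)(a) (equivalently the exclusion of $t=15$ in Corollary \ref{cor5}) applies. This leaves only $|G(H)|=1$, which must be excluded to show the branch is vacuous. Here I would pass to $H^*$, also non-semisimple of dimension $45$: if $H^*$ were pointed then, the relevant grouplike groups being abelian, $|G(H)|=|G(H^*)|$, forcing $H^*=\Bbbk$ — absurd — so $H^*$ is non-pointed; moreover $|G(H^*)|=1$ is impossible because then $\mathcal{S}^4=\mathrm{id}$ while $\mathrm{Tr}(\mathcal{S}^2)=0$ by non-semisimplicity, so $\mathcal{S}^2$ is an involution of trace $0$ and $\dim H$ would be even, contradicting $\dim H=45$. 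Running the two previous branches on the non-pointed $H^*$ then yields $|G(H^*)|=3$ together with a copy of the Taft algebra $T_3\subseteq H^*$, hence a Hopf surjection $H\twoheadrightarrow T_3\twoheadrightarrow \Bbbk C_3$. My intended contradiction is to extract from this surjection a normal Hopf subalgebra $A\subseteq H$ of dimension $15$ with $G(A)\subseteq G(H)=\{1\}$: since every Hopf algebra of dimension $15=3\cdot5$ is semisimple and therefore equals $\Bbbk C_{15}$ (with $|G|=15$), such an $A$ cannot exist. I expect \emph{this} step — producing the normal subalgebra, i.e. ruling out the asymmetric case $|G(H)|=1,\ |G(H^*)|=3$ — to be the main obstacle, since it is exactly where the block-system bounds give out and one must invoke normality of the relevant sub/quotient together with the classification in dimension $pq$.

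Finally, with $|G(H)|=3$ established, Theorem \ref{thm2}(2)(a) shows that $H$ must carry a nontrivial skew-primitive (otherwise $|G(H)|\ne3$), so its maximal pointed Hopf subalgebra $K$ is nontrivial with $G(K)=G(H)=C_3$. Then $3\mid\dim K\mid45$ and $3<\dim K<45$, the upper inequality being strict because $H$ is non-pointed, whence $\dim K\in\{9,15\}$. The case $\dim K=15$ needs a $5$-dimensional Nichols algebra over $C_3$, impossible as those dimensions are powers of $3$; so $\dim K=9$, the associated graded of $K$ is $\Bbbk[x]/(x^3)\,\#\,\Bbbk C_3$, and by the classification of Hopf algebras of dimension $p^2$ the unique non-cosemisimple pointed one with grouplikes $C_3$ is the Taft algebra. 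Thus the maximal pointed Hopf subalgebra of $H$ is the Taft algebra of dimension $9$, completing the proof.
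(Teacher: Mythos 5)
The paper states this corollary without proof, so there is no argument of the author's to compare against; judging your proposal against the paper's quoted results, the overall architecture is reasonable: eliminating $|G(H)|\in\{9,15,45\}$ (and $5$) via the pointed Hopf subalgebra generated by $G(H)$ and a skew-primitive, eliminating $|G(H)|\in\{5,9,15\}$ in the skew-primitive-free case via Theorem \ref{thm1}, eliminating $|G(H)|=3$ there via Theorem \ref{thm2}(2)(a), and the final identification of the maximal pointed Hopf subalgebra as $T_3$ (using that dimension $15$ is impossible and the $p^2$ classification) are all correct. The difficulty is concentrated exactly where you flag it: the case $|G(H)|=1$, which neither Theorem \ref{thm1} (with $r=1$ it only gives $\dim H\ge 14$) nor Corollary \ref{cor5} (which requires $|G(H)|$ to be a prime) touches.

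Two steps in that case do not hold up. First, the auxiliary claim that $|G(A)|=|G(A^*)|$ for a finite-dimensional pointed Hopf algebra $A$ with abelian grouplikes is false: the $8$-dimensional lifting $\Bbbk\langle g,x\rangle/(g^4-1,\ x^2-(g^2-1),\ gx+xg)$ has $|G|=4$ but only two algebra characters, since $gx=-xg$ forces $\chi(x)=0$ and then $x^2=g^2-1$ forces $\chi(g)^2=1$. The conclusion you need ($H^*$ pointed of dimension $45$ implies $|G(H)|>1$) is still true, but it has to come from the Nichols-algebra dimension count over $C_3$, $C_5$, $C_9$ (all powers of $3$ or $5$, never $15$, $9$, $5$), which forces $|G(H^*)|=15$ and $H^*$ a generalized Taft algebra, whose characters one then counts directly. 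Second, and more seriously, in the remaining subcase ($|G(H)|=1$, $H^*$ non-pointed, $T_3\subseteq H^*$) your contradiction requires a normal Hopf subalgebra of dimension $15$ in $H$; but the coinvariants $H^{\mathrm{co}\,\pi}$ of the surjection $\pi:H\to \Bbbk C_3$ form only a normal left coideal subalgebra, not in general a Hopf subalgebra, so the appeal to the classification in dimension $15$ does not apply to it. As written, your argument therefore proves the corollary only under the additional hypothesis $|G(H)|>1$; closing the gap at $|G(H)|=1$ needs a genuinely new input (and, it should be said, the paper's own stated results do not visibly supply one either).
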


\vskip7mm


\vskip7mm

\bibliographystyle{plain}

\end{document}